\documentclass[11pt]{amsart}

\usepackage{a4wide,enumerate,color,graphicx}
\usepackage{amsmath,bm}
\usepackage{scrextend} 
\allowdisplaybreaks
\usepackage[normalem]{ulem}
\usepackage[dvipsnames]{xcolor}
\usepackage{setspace}
\usepackage{amsbsy}
\usepackage{hyperref}
\usepackage[overload]{empheq}

\newcommand{\R}{{\mathbb R}} 
\newcommand{\del}{\partial}
\newcommand{\diver}{\operatorname{div}} 

\newcommand{\dx}{\textnormal{d}x}
\newcommand{\dt}{\textnormal{d}t}
\newcommand{\ds}{\textnormal{d}s}
\newcommand{\dy}{\textnormal{d}y}
\newcommand{\dtau}{\textnormal{d}\tau}
\newcommand{\dz}{\textnormal{d}z}

\newcommand{\torus}{\mathbb{T}}
\newcommand{\T}{{\mathbb T}} 
\newcommand{\torusd}{{\mathbb{T}^3}}
\newcommand{\dX}{\textnormal{d}X}
\newcommand{\dY}{\textnormal{d}Y}
\newcommand{\dZ}{\textnormal{d}Z}
\newcommand{\dsigma}{\textnormal{d}\sigma}

\newcommand{\cc}{\mathbf{c}}
\newcommand{\bcc}{\bar{\mathbf{c}}}
\newcommand{\dd}{\mathbf{d}}
\newcommand{\eps}{\epsilon}
\newcommand{\YY}{\boldsymbol{Y}}

\newcommand{\brho}{\bar{\rho}}
\newcommand{\bu}{\bar{u}}

%\doublespacing
\numberwithin{equation}{section}
\hypersetup{
    colorlinks=true,
    linkcolor=blue,
    filecolor=blue,      
    urlcolor=blue,
}

%%%%%%%%%%%%%% MY DEFINITIONS
%%%%%%%%%%%%%%%%%%%%%%%%

\theoremstyle{plain}
\newtheorem{theorem}{Theorem}

\newtheorem{lemma}[theorem]{Lemma}
\newtheorem{corollary}[theorem]{Corollary}
\newtheorem{definition}[theorem]{Definition}

%\usepackage{amsmath}
%\allowdisplaybreaks

\begin{document}

	\title[Renormalized solutions and Uniqueness]{Renormalized solutions for the Maxwell--Stefan system with an application to uniqueness of weak solutions}

        \author[S. Georgiadis]{Stefanos Georgiadis}
        \address{Computer, Electrical and Mathematical Sciences and Engineering Division,
        King Abdullah University of Science and Technology (KAUST),
        Thuwal 23955-6900, Saudi Arabia and Institute for Analysis and Scientific Computing, Vienna University of Technology, Wiedner Hauptstra\ss e 8-10, 1040 Wien, Austria}
        \email{stefanos.georgiadis@kaust.edu.sa} 

        \author[H. Kim]{Hoyoun Kim}
        \address{Computer, Electrical and Mathematical Sciences and Engineering Division, King Abdullah University of Science and Technology (KAUST), Thuwal 23955-6900, Saudi Arabia}
        \email{hoyoun.kim@kaust.edu.sa}

        \author[A. E. Tzavaras]{Athanasios E. Tzavaras}
        \address{Computer, Electrical and Mathematical Sciences and Engineering Division, King Abdullah University of Science and Technology (KAUST), Thuwal 23955-6900, Saudi Arabia}
        \email{athanasios.tzavaras@kaust.edu.sa}
	
	%\date{\today}

\baselineskip = 12pt
	
	\begin{abstract}
	We give conditions that guarantee uniqueness  of renormalized solutions for the Maxwell-Stefan system. The proof
		is based on an identity for the evolution of the symmetrized relative entropy.
		Using the method of doubling the variables we derive the identity for two renormalized solutions and use information
		on the spectrum of the Maxwell-Stefan matrix to estimate the symmetrized relative entropy and show uniqueness.
		We then show that weak solutions for the Maxwell-Stefan system have sufficient regularity to produce renormalized solutions. 
		Combining the two results  yields a uniqueness result for weak solutions of the Maxwell-Stefan system with bounded fluxes.
    \end{abstract}

        \subjclass[2020]{35D99, 35Q35, 76N10, 76R50, 76T30.}
        %35D30  	Weak solutions to PDEs
        %35D99  	None of the above, but in this section
        %35Q35  	PDEs in connection with fluid mechanics
        %76N10  	Existence, uniqueness, and regularity theory for compressible fluids and gas dynamics
        %76R50  	Diffusion
        %76T30  	Three or more component flows
        \keywords{Multicomponent, Maxwell-Stefan system, entropy structure, relative entropy, uniqueness of renormalized solutions, doubling of variables.}  
	
	\maketitle
	
	\tableofcontents
%----------------------------------------------------------------------------------------
%----------------------------------------------------------------------------------------
%----------------------------------------------------------------------------------------
%----------------------------------------------------------------------------------------
%----------------------------------------------------------------------------------------

\section{Introduction}
 
The Maxwell-Stefan system describes diffusive phenomena in multicomponent systems of gases. It appears in applications from various domains, 
for instance in dialysis, ion exchange, sedimentation and electrolysis. The problem of existence of solutions has been extensively studied in the literature, 
where local-in-time strong solutions \cite{Bo11,GiMa98,HeMePrWi17} and global-in-time weak solutions \cite{JuSt13} have been established. The uniqueness of strong solutions has been shown in \cite{HeMePrWi17,HuSa18}, however for weak solutions the problem has been open, with the only available result
being the weak-strong uniqueness property \cite{HuJuTz22}. We show here that weak solutions to the Maxwell-Stefan system are unique, provided the molar fluxes are essentially bounded.

 %----------------------------------------------------------------------------------------
 %----------------------------------------------------------------------------------------
 
  The Maxwell-Stefan system describes the evolution of the vector function $\cc=(c_1,\dots,c_n)$ where $c_i(x,t)$ is the concentration and $u_i (x,t)$ the velocity of the $i$-th component, $i=1,\dots,n$.
  Their dynamics is described by $n$ equations for the conservation of mass,
\begin{equation}
            \label{eq:mass} 
            \partial_t c_i + \nabla\cdot J_i  = 0 \, ,
\end{equation}
with $J_i = c_i u_i$ the molar fluxes and $u_i$ the diffusional velocities. System \eqref{eq:mass} is complemented by the constrained linear system
\begin{align}[left=\empheqlbrace]
            \label{eq:linear-system}
            -\sum_{{j=1},{j\not=i}}^n \frac{c_jJ_i - c_iJ_j}{D_{ij}} & = \nabla c_i  \quad \quad  i=1,\dots,n \, ,
            \\
            \label{eq:constraint}
            \sum_{i=1}^n J_i &= 0,
\end{align} 
for determining the molar fluxes $J_i(x,t)$, where $D_{ij}=D_{ji}>0$, $i\not=j$, are the diffusion coefficients modeling binary interactions between the components. 
We will consider the problem on  $\T^3 \times (0,T)$, where $\T^3$ is the $3$-dimensional torus and $T>0$. 
The system is completed with the initial conditions, 
	\begin{equation}
		\label{eq:inibd} c_i(x,0) = c_i^0(x) \quad  \text{ on } \T^3 \, .
	\end{equation}
The initial concentrations $c_i^0(x)\geq0$ are such that $\sum_{i=1}^nc_i^0(x)=1$ on $\T^3$, then  the constraint \eqref{eq:constraint} propagates
and implies $\sum_{i=1}^nc_i =1$, for $x \in \torusd$, $t >0$. The property $\sum_{i=1}^n c_i = 1$ provides a consistency condition for the constrained linear system \eqref{eq:linear-system}-\eqref{eq:constraint} which is uniquely solvable; 
see \cite{GiMa98,Bo11,JuSt13,HuJuTz19,HuJuTz22} for solvability properties of the linear system \eqref{eq:linear-system}-\eqref{eq:constraint}.

The system \eqref{eq:mass}-\eqref{eq:constraint} is endowed with an entropy functional, which for simple gases takes the form
 \begin{equation}     
        \label{eq:entropy}
        H(\cc)= \int_{\T^3} \sum_{i=1}^n c_i(\ln c_i-1)\dx \, .
    \end{equation}
The functional is well-defined even for $c_i=0$ and under natural conditions on the initial data leads
to a global existence theory \cite{JuSt13} for weak solutions of \eqref{eq:mass}-\eqref{eq:constraint},
by exploiting  the entropy structure in conjunction with the properties of the friction operator.

A commonly used method for stability proceeds by computing the evolution of the relative entropy, a method
applicable when one of the solutions is a strong solution. It provides  \cite{HuJuTz22} a weak-strong uniqueness result 
under the assumptions that the strong solution is sufficiently smooth and presents no anomalous diffusion (concentrations in the entropy identity).
The required smoothness of the strong solution is considerably more than what is provided by the global existence result in \cite{JuSt13}
and regularity results for weak solutions (even if expected) are currently not available. The strong solution in \cite{Bo11} is only locally defined in time
and  the weak-strong uniqueness result \cite{HuJuTz22} yields uniqueness so long as a strong solution exists.

To exploit the relative entropy under reduced regularity assumptions two new tools are introduced: (i) the relative entropy is replaced by the
symmetrized relative entropy 
\begin{equation}
		\label{eq:symrelen} H^{sym} (\cc,\bar{\cc}) := \int_{\T^3} \sum_{i=1}^n(\ln{ c_i }-\ln \bar{c}_i )(c_i-\bar{c}_i)\ \dx
\end{equation}
which satisfies a nice formal identity (see \eqref{eq:entropysystem}). (ii) For the rigorous derivation of the relative entropy identity we introduce
renormalized solutions for the Maxwell-Stefan system. (iii) Finally,  the singular behavior at the origin needs to be approximated, and this is achieved by introducing
a companion Maxwell-Stefan system for shifted solutions.

Renormalized solutions were introduced in the study of the Boltzmann equation \cite{DiLiBol89} and 
extended to transport equations \cite{DiLi89}, nonlinear elliptic problems \cite{Mu93},  equations of fluid mechanics \cite{Li96}, nonlinear parabolic problems \cite{BlMu97}. Closest to the use here are the references  \cite[pp. 227]{Li96}, \cite[Sec 4.1.5]{Feireisl03} concerning the compressible Navier-Stokes system. 
To motivate the definition of a renormalized solution, compute the evolution of a smooth function $\beta(c_i)$. Formal multiplication of \eqref{eq:mass} by $\beta'(c_i)$ gives: 	
\begin{equation} \label{eq:renorm}
		\partial_t \beta(c_i) + \nabla\cdot(\beta'(c_i) c_iu_i) - 2 c_i\beta''(c_i) \nabla \sqrt{c_i} \cdot \sqrt{c_i} u_i = 0
\end{equation} 
From the existence theory of weak solutions \cite{JuSt13}, we know that $\nabla \sqrt{c_i} \cdot \sqrt{c_i} u_i  \in L^1$ and use this information 
to give meaning to  \eqref{eq:renorm}.

The formal identity \eqref{eq:entropysystem} leads after integration to 
\begin{equation}
\label{eq:Gronwall}
\begin{aligned}
&\frac{\textnormal{d}}{\dt} \int_\torusd \Big ( \sum_i (\ln c_i  - \ln \bar{c}_i) ( c_i  - \bar{c}_i )\Big )  dx  \\
&\qquad + 
\int_\torusd  \underbrace{\sum_{i,j}  {\frac{1}{ 2 D_{i j}} ( c_i c_j  + \bar{c}_i \bar{c}_j ) \big | (u_i - \bar{u}_i ) - (u_j - \bar{u}_j) \big |^2 }}_{Q} dx
= E_1 + E_2 \\[5pt]
\end{aligned}
\end{equation}
where $E_1$, $E_2$ are error terms. 
Deriving the identity \eqref{eq:Gronwall} is the main difficulty; in the process we employ renormalized solutions
and an adaptation of  Krushkov's method of doubling of variables.
It is also proved that weak solutions of the regularity class provided by the existence theory \cite{JuSt13} yield renormalized solutions. 
As an application of the latter result, we obtain uniqueness of weak solutions. 
The obtained uniqueness theorem is conditional, in the sense that it holds for global weak solutions of additional regularity than that provided in \cite{JuSt13}; 
namely, it is valid for solutions of regularity $\nabla c_i \in L^\infty_{t,x}$ versus the regularity  
$\nabla \sqrt{c_i} \in L^2_{t,x}$ provided by the existence theory. On the other hand, it appears to exploit the full  capacity of the relative entropy identity, see the discussion in section \ref{sec:discussion}.

An outline of the contents follows: In section \ref{sec:comparison}, we state the formal estimates,
motivated by the transport equation structure, leading to the transport identity \eqref{eq:entropysystem} for the symmetrized relative entropy.
Renormalized solutions are used to give a meaning to \eqref{eq:Gronwall}: the approach, definitions, and 
obtained results are outlined in section \ref{sec:description}.
In section \ref{wk-ren}, we prove that weak solutions of the  Maxwell-Stefan system of the regularity class provided by the existence theory \cite{JuSt13} 
induce renormalized solutions. This hinges on two lemmas, inspired by the work  \cite{Fi15} on renormalized solutions of reaction-diffusion systems,
proved in section  \ref{wk-ren}. Thereafter, we focus on renormalized solutions of the Maxwell-Stefan system.
In section \ref{sec3}, we apply the method of doubling of variables to renormalized solutions in order to derive the symmetrized relative entropy identity
stated in Theorem \ref{theorem4}.
Then we have to resolve the following problem. The derived identity \eqref{eq:relenid} holds for test functions $\beta \in C^2([0,\infty))$ and this class does not include the
logarithm due to the singularity at the origin. To compensate, we introduce in section \ref{sec5} an approximation at the origin and the observation that a shifted solution
satisfies a companion Maxwell-Stefan system.
The last step,  performed in section \ref{sec6}, combines information from \cite{HuJuTz22} on the spectral gap of an operator associated to the frictional relative work $Q$
with a delicate estimation of the error terms $E_1$ and $E_2$ to circumvent the logarithmic singularities 
of the symmetrized relative entropy.
It leads to a uniqueness result for renormalized solutions of the Maxwell-Stefan system, see Theorem \ref{theorem-uniqueness}. Due to the equivalence of
weak and renormalized solutions, it provides Corollary \ref{thm-weak} on uniqueness of weak solutions.

%-----------------------------------------------------
%-----------------------------------------------------
%-----------------------------------------------------
%-----------------------------------------------------

\section{Formal comparison of solutions of transport equations}
\label{sec:comparison}

The entropy \eqref{eq:entropy} of the Maxwell-Stefan system  plays an important role in the existence theory of weak solutions \cite{JuSt13}, 
and the relative entropy 
\begin{equation}
 \label{eq:relen}
        H(\cc | \bcc)  = \int_\torusd  \sum_{i=1}^n  \Big (  c_i \ln \frac{c_i}{\bar{c}_i} - (c_i - \bar{c}_i) \Big )  \dx
 \end{equation}
was used in \cite{HuJuTz22} to estimate
 the distance between two solutions $\cc$ and $\bcc$. Here, we will employ instead a symmetrized version of the relative entropy
 \begin{equation}
 \label{eq:relentropy}
       H^{sym} ( \cc , \bcc) :=  H(\cc | \bcc) + H(\bcc | \cc) = \int_\torusd \sum_{i=1}^n (\ln c_i - \ln \bar{c}_i ) (c_i - \bar{c}_i)  \dx
 \end{equation}
The quantity $(\ln c_i - \ln \bar{c}_i ) (c_i - \bar{c}_i)$ is positive for $0 < c_i, \bar{c}_i < 1$ when $c_i \ne \bar{c}_i$. It takes the value $\infty$
when one out of $c_i$ or $\bar{c_i}$ vanishes, and it can be defined to be $+\infty$ when both vanish. Hence, it can serve to measure
the distance between two solutions.

For a renormalized solution corresponding to \eqref{eq:renorm}, an entropy functional is defined by
\begin{equation}\label{eq:renorm-entropy} 
   H_B (\cc)= \int_\torusd   \sum_{i=1}^n\int_0^{c_i}\beta(s)\ds \, \dx \, .
\end{equation} 
Setting $B(s):=\int_0^s\beta(\tau)\dtau$, the symmetrized relative entropy of two renormalized solutions $\cc$ and $\bar{\cc}$ is given by
 \begin{equation}\label{eq:renorm-relen}
            H^{sym}_B (\cc,\bar{\cc}) =
                \int_\torusd \sum_{i=1}^n (\beta(c_i)-\beta(\bar{c}_i))(c_i-\bar{c}_i) \, \dx \, .
 \end{equation} 
When $\beta$ is a monotone map, the symmetrized relative entropy is nonnegative and offers a suitable tool to measure the
distance between $\cc$ and $\bar{\cc}$.

 \subsection{Uniqueness for the heat equation}\label{unheat}
 In this section we present a uniqueness/stability estimate for the heat equation. 
 The heat equation is first written in the form suggested by the so-called Otto calculus \cite{Ot01}
 \begin{align}
 \label{heateq}
 \del_t \rho + \diver \rho u = 0  \qquad 
  u  = - \nabla \ln \rho.
\end{align}
 
 We present an identity based only on the transport equation, $\del_t \rho + \diver \rho u = 0 $,
 which yields  a relative entropy comparison between two solutions of \eqref{heateq}. 
 Let $(\rho, u)$ and $(\brho, \bu )$ be two solutions of the transport equation and we write down the identities
 \begin{align}
 \del_t \rho + \diver \rho u &= 0  
 \label{releq1}
 \\
 \del_t \ln \bar{\rho} + \diver \bar{u} + \nabla \ln \bar{ \rho} \cdot \bar{u} &= 0.
 \label{releq2}
 \end{align}
 We multiply \eqref{releq1} by $\ln \bar{\rho}$ and \eqref{releq2} by $\rho$ and add to obtain
\begin{equation}\label{eqn1}
\del_t (\rho \ln \bar{\rho} )+ \diver ( \rho  \ln \bar{\rho}  \, u + \rho \bar{u} )  + \nabla \ln \frac{\bar{\rho}}{\rho} \cdot \rho \bar{u} - \nabla \ln \bar{\rho} \cdot \rho u = 0.
 \end{equation}
 In particular, we have
 \begin{equation}\label{eqn2}
\del_t (\rho \ln \rho )+ \diver ( \rho  \ln  \rho  \, u + \rho u )   - \nabla \ln  \rho \cdot \rho u = 0.
 \end{equation}

 Next, we add  \eqref{eqn2} with the corresponding equation for $(\bar{\rho}, \bar{u})$, and then from the result we subtract \eqref{eqn1} and the analog
 of \eqref{eqn1} with $(\rho, u)$ and  $(\bar{\rho}, \bar{u})$ interchanged. After re-arranging the terms we obtain
 \begin{equation}
 \label{relensym}
 \begin{aligned}
 \del_t \big ( (\ln \rho - \ln \brho) (\rho - \brho) \big ) &+ \diver \big ( (\ln \rho - \ln \brho) (\rho u - \brho \bu) + (\rho - \brho) (u - \bu) \big )
 \\
 &-  (\rho + \brho) \nabla (\ln \rho - \ln \brho) \cdot  (u - \bu) = 0.
  \end{aligned}
 \end{equation}
We emphasize that up to here we only used the transport equation \eqref{releq1}.

Now, consider the heat equation \eqref{heateq} and introduce the  formula $u = - \nabla \ln \rho$ to \eqref{relensym}.
After an integration, we obtain the identity
\begin{equation}
\frac{d}{dt} \int_\Omega \big ( (\ln \rho - \ln \brho) (\rho - \brho) \big ) \dx + \int_\Omega  (\rho + \brho)  | \nabla (\ln \rho - \ln \brho) |^2 \dx = 0
\end{equation}
which provides an alternative approach to show uniqueness and stability for the heat equation, based on its entropy structure.

\subsection{The symmetrized relative entropy identity for the Maxwell-Stefan system}
\label{sec:symrelen}
Consider next the Maxwell-Stefan system and return to the notation $c_i$ for the densities and $J_i = c_iu_i$ for the fluxes.  Using
the transport equations  \eqref{eq:mass}  for the individual components and following the derivation of section \ref{unheat} we obtain the identity
\begin{equation}
\begin{split}
\del_t & \Big ( \sum_i (\ln c_i  - \ln \bar{c}_i) ( c_i  - \bar{c}_i )\Big )  \\
&+ \diver \Big ( \sum_i (\ln c_i  - \ln \bar{c}_i) ( c_i u_i  - \bar{c}_i \bar{u}_i) + ( c_i  - \bar{c}_i )(u_i - \bar{u}_i ) \Big )
\\
&\qquad -  \sum_i \Big ( \nabla (\ln c_i  - \ln \bar{c}_i)  \cdot (c_i + \bar{c}_i) (u_i - \bar{u}_i ) \Big ) = 0.
\end{split}
\label{eq:entropysystem1}
\end{equation}
Next, we use the algebraic system \eqref{eq:linear-system},
$$
 -\sum_{j=1}^n \frac{1}{D_{ij}} c_j (u_i - u_j)  = \nabla \ln c_i
 $$
and the symmetry of $D_{ij}$ to express
$$
\begin{aligned}
I &=  -  \sum_i   (c_i + \bar{c}_i) (u_i - \bar{u}_i) \cdot \nabla (\ln c_i  - \ln \bar{c}_i)  \cdot (c_i + \bar{c}_i) (u_i - \bar{u}_i )
\\
&=  \quad  \sum_{i,j} \tfrac{1}{2} c_i c_j \frac{1}{ D_{i j}} \big | (u_i - \bar{u}_i ) - (u_j - \bar{u}_j) \big |^2 + \sum_{i ,j} (c_j - \bar{c}_j) (u_i - \bar{u}_i) \cdot \frac{c_i}{D_{i j}} (\bar{u}_i - \bar{u}_j)
\\
&\quad + \sum_{i,j} \tfrac{1}{2} \bar{c}_i \bar{c}_j \frac{1}{ D_{i j}} \big | (u_i - \bar{u}_i ) - (u_j - \bar{u}_j) \big |^2 + \sum_{i ,j} (c_j - \bar{c}_j) (u_i - \bar{u}_i) \cdot \frac{\bar{c}_i}{D_{i j}} (u_i - u_j).
\end{aligned}
$$

Combining with \eqref{eq:entropysystem1} we arrive at the (formal) identity
\begin{equation}
\begin{split}
\del_t & \Big ( \sum_i (\ln c_i  - \ln \bar{c}_i) ( c_i  - \bar{c}_i )\Big )  \\
&\quad + \diver \Big ( \sum_i (\ln c_i  - \ln \bar{c}_i) ( c_i u_i  - \bar{c}_i \bar{u}_i) + ( c_i  - \bar{c}_i )(u_i - \bar{u}_i ) \Big )
\\
&\quad + \sum_{i,j}  \frac{1}{ 2 D_{i j}} ( c_i c_j  + \bar{c}_i \bar{c}_j ) \big | (u_i - \bar{u}_i ) - (u_j - \bar{u}_j) \big |^2 
\\
&= -  \sum_{i ,j} (c_j - \bar{c}_j) (u_i - \bar{u}_i) \cdot \frac{1}{D_{i j}}  \Big ( c_i (\bar{u}_i - \bar{u}_j) + \bar{c}_i (u_i - u_j) \Big ).
\end{split}
\label{eq:entropysystem}
\end{equation}
The identity \eqref{eq:entropysystem} will guide the analysis of uniqueness in the following sections. It bears analogies to the identity used 
for weak-strong uniqueness in \cite{HuJuTz22}, but here the evolution is computed for the symmetrized relative entropy.
It should be compared to Corollary \ref{corollary5} for $\delta = 0$. 
In section \ref{sec6.1} it is complemented with
spectral properties of the matrix in \eqref{eq:linear-system} that will quantify the effect of friction. 

%-----------------------------------------------------
%-----------------------------------------------------
%-----------------------------------------------------
%-----------------------------------------------------

\section{Description of results}\label{sec:description}
We work on the torus $\torusd$ with initial data that satisfy 
\begin{equation}
\label{assdata}
 0\leq c_i^0\in L^\infty({\T^3}) \, , \; \; i = 1, \dots, n  \, ; \qquad  \sum_{i=1}^nc_i^0=1 \, ;  \qquad  H(\cc^0)<+\infty \, .
 \tag{A}
\end{equation}
The solution propagates from the data the constraint  $\sum_i c_i = 1$. We recall the notion of weak solution.
 \begin{definition} \label{def-weak}
        A function $\cc=(c_1,\dots,c_n)$ is a weak solution of \eqref{eq:mass}-\eqref{eq:inibd}, if for $i\in\{1,\dots,n\}$: 
        \begin{itemize}
			\item[(i)]  $0\leq c_i\leq1$ and ${\displaystyle \sum_{i=1}^n c_i=1}$,
           \item [(ii)] $\nabla \sqrt{c_i}  \in L^2_\textnormal{loc}({\T^3}\times(0,\infty))  \, ,   \; \sqrt{c_i}u_i \in L^2_\textnormal{loc}({\T^3}\times(0,\infty))  )$,
            \item[(iii)] for any test function $\phi_i \in C_c^\infty({\T^3}\times[0,\infty))$, we have
		  \begin{equation} \label{eq:weak}
                \int_{\T^3}  c_i^0(x)\phi_i(x,0) \dx + \int_0^\infty \int_{\T^3} c_i \partial_t \phi_i \dx\dt+\int_0^\infty \int_{\T^3} c_iu_i \cdot \nabla \phi_i \dx \dt= 0
		  \end{equation} 
		 and  $c_iu_i$ satisfies  \eqref{eq:linear-system}-\eqref{eq:constraint}.  
           \item[(iv)]  ${\displaystyle \lim_{t \to 0} \int_\torusd | c_i (x, t) - c_i^0 (x) | dx \to 0. } $
        \end{itemize}   
 \end{definition} 
   
 \smallskip 
 \noindent
 Existence of weak solutions is established in \cite{JuSt13} for bounded domains under no-flux boundary conditions. Using  \cite{JuSt13} 
 and spectral properties for the Maxwell-Stefan system \eqref{eq:linear-system}-\eqref{eq:constraint} from \cite{HuJuTz22},  property (ii) implies the regularity 
\begin{equation}\label{regularwk}
c_i\in C^0_\textnormal{loc}([0,\infty);L^2({\T^3})), \quad \sqrt{c_i}u_i\in L^2_\textnormal{loc}({\T^3}\times(0,\infty)) 
\end{equation}
and gives meaning to the terms of \eqref{eq:weak}.

The definition of renormalized solutions is motivated by the formal identity \eqref{eq:renorm} and the regularity information provided by the existence theory,
notably that $\nabla \sqrt{c_i} \cdot \sqrt{c_i} u_i \in L^1_{\textnormal{loc}} ( \torusd \times (0, \infty) )$.
%	Let \[ \mathcal{B}=\{\beta\in C^1(\R) \textnormal{ s.t. } \sqrt{s}\beta'(s),s\beta''(s) \textnormal{ are bounded in } \mathbb{R}\}. \]
%	Then, we have the following definition:   
For test functions $\beta$ of class $C^2 ([0,\infty))$ we introduce the definition:
	
	\begin{definition}\label{def-renorm}
		A function $\cc = (c_1,\dots,c_n)$ is a renormalized solution of \eqref{eq:mass}-\eqref{eq:inibd}, if for $i\in\{1,\dots,n\}$ it satisfies:
\begin{itemize}
           \item[(i)]  $0\leq c_i\leq1$ and ${\displaystyle \sum_{i=1}^n c_i=1}$,
           \item [(ii)] $\nabla \sqrt{c_i}  \in L^2_\textnormal{loc}({\T^3}\times(0,\infty))  \, ,   \; \sqrt{c_i}u_i \in L^2_\textnormal{loc}({\T^3}\times(0,\infty))  )$, 
			\item[(iii)] for any test function $\phi_i \in C_c^\infty({\T^3}\times[0,\infty))$ and for $\beta \in C^2 ([0,\infty))$
			\begin{equation} \label{eq:weak-renorm}
				\begin{split}
					& \int_{\T^3}\beta(c_i^0)\phi_i(x,0)\dx+\int_0^\infty \int_{\T^3}  \beta(c_i) \partial_t \phi_i \dx \dt \\
                    & + \int_0^\infty \int_{\T^3} \sqrt{c_i}\beta'(c_i) \sqrt{c_i}u_i \cdot \nabla \phi_i \dx \dt + 2 \int_0^\infty\int_{\T^3} c_i \beta''(c_i) \nabla \sqrt{c_i} \cdot \sqrt{c_i}u_i \phi_i \dx \dt=0
				\end{split}
			\end{equation}
		and  $c_iu_i$ satisfies  \eqref{eq:linear-system}-\eqref{eq:constraint}.  
			\item[(iv)] ${\displaystyle \lim_{t \to 0} \int_\torusd | \beta ( c_i (x, t) )  -  \beta ( c_i^0 (x) )  | \dx \to 0 } $, for $\beta \in C^2([0,\infty))$.
		\end{itemize}
	\end{definition}
	
\smallskip
\noindent
Renormalized solutions are often introduced to handle the behavior near infinity. Here, the solution is bounded and the goal
is to handle the behavior at $c_i = 0$. Within the regularity class of weak solutions and for $\beta \in C^2([0,\infty))$ all terms in \eqref{eq:weak-renorm}
are well defined.

To this degree, we start by studying the relation between weak and renormalized solutions. We  show that the weak solutions constructed in \cite{JuSt13} have sufficient regularity to induce renormalized solutions in the sense of Definition \ref{def-renorm}:
    
    \begin{theorem} \label{thm-weakrenorm}
    	Let $\cc$ be a weak solution of \eqref{eq:mass}-\eqref{eq:linear-system} in the sense of Definition \ref{def-weak} of regularity \eqref{regularwk}.
	Then $\cc$ is a renormalized solution of \eqref{eq:mass}-\eqref{eq:linear-system}.
    \end{theorem}
   
Then, the problem of uniqueness of weak solutions reduces to showing uniqueness for renormalized solutions. The objective is to make the derivation of the symmetrized relative entropy identity \eqref{eq:entropysystem1} (equivalently \eqref{eq:entropysystem}) rigorous. At the same time \eqref{eq:symrelen} only allows for non-vanishing solutions because of the logarithmic term $\ln c_i$. 
Thus, we introduce a symmetrized relative entropy, which is regularized so as to also allow for zero concentrations:
	\begin{equation}
		\label{eq:symrelenapp} F(\cc,\bar{\cc}) := \int_{\T^3} \sum_{i=1}^n (\ln{(c_i + \delta)}-\ln(\bar{c}_i+\delta))(c_i-\bar{c}_i)\ \dx
	\end{equation}
where $\delta>0$ is a constant to be selected. The strategy will be to calculate a Gr\"onwall-type inequality for the functional $F(\cc,\bar{\cc}) > 0$.
    
The first step is to derive an identity for the symmetrized relative entropy between  two renormalized solutions, using the method of doubling the variables. 
This method, due to Kruzhkov \cite{Kr70}, was further developed in \cite{Ca94}, and allows to treat one solution as a constant with respect to the evolution of the other, 
in order to derive a relative entropy identity, overcoming the lack of regularity. Compared to the use of doubling of variables in
hyperbolic problems, we here employ also the regularity properties emerging from the existence theory.

	The work proceeds as follows: First we establish a symmetrized relative entropy inequality between two renormalized solutions: 	
	\begin{theorem}\label{theorem4}
		Let $\cc$ and $\bar{\cc}$ be renormalized solutions of \eqref{eq:mass}-\eqref{eq:linear-system} emanating from the same initial data. 
		Then for any $T>0$ and $\beta \in C^2([0,\infty))$, the following identity holds: 
		\begin{equation}\label{eq:relenid}
			\begin{split}
%				&F(\cc,\bar{\cc})(T) 
                 & \sum_{i=1}^n\int_{\T^3} \big ( \beta (c_i) - \beta (\bar{c}_i) \big ) ( c_i - \bar{c}_i) \, \dx \Big |_{t = T}
                 \\
				&\quad = \sum_{i=1}^n \int_0^T \int_{\T^3} \nabla \left(\beta(c_i) - \beta(\bar{c}_i)  \right)  \cdot (c_iu_i-\bar{c}_i\bar{u}_i) \dx \dt 
				\\ &\qquad + \sum_{i=1}^n \int_0^T\int_{\T^3} \Big( \nabla \big[(c_i - \bar{c}_i)\beta'(c_i)\big] \cdot c_i u_i  - \nabla \big[(c_i - \bar{c}_i)\beta'(\bar{c}_i)\big] \cdot \bar{c}_i \bar{u}_i \Big) \dx\dt.
			\end{split}
		\end{equation} 
	\end{theorem}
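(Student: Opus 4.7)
The plan is to make rigorous, via the method of doubling of variables, the formal Gronwall calculation one obtains by differentiating $\int_{\T^3}(\beta(c_i)-\beta(\bar{c}_i))(c_i-\bar{c}_i)\,\dx$ in time. Formally, substituting the continuity equations $\partial_t c_i = -\nabla\cdot(c_i u_i)$ and $\partial_t \bar{c}_i = -\nabla\cdot(\bar{c}_i \bar{u}_i)$, integrating by parts on $\torusd$, and then integrating in $t$ from $0$ to $T$ (using coincidence of initial data) reproduces the target right-hand side of \eqref{eq:relenid}. To implement this within the renormalized framework I would decompose the integrand as
\begin{equation*}
(\beta(c_i)-\beta(\bar{c}_i))(c_i-\bar{c}_i) = c_i\beta(c_i) - \bar{c}_i\beta(c_i) - c_i\beta(\bar{c}_i) + \bar{c}_i\beta(\bar{c}_i),
\end{equation*}
denote the four corresponding spatial integrals $A_i(t),B_i(t),C_i(t),D_i(t)$, and compute $(A_i-B_i-C_i+D_i)(T)$, which by coincidence of initial data equals the left-hand side of \eqref{eq:relenid}.

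The diagonal pieces $A_i$ and $D_i$ depend on only one solution and are obtained directly from the renormalized identity \eqref{eq:weak-renorm} applied to $c_i$ (respectively $\bar{c}_i$) with renormalization $\tilde\beta(s):=s\beta(s)\in\mathcal{B}$ and test function $\phi_i(x,t)=\chi_T(t)$, where $\chi_T\in C_c^\infty([0,\infty))$ smoothly approximates $\chi_{[0,T]}$. Passing $\chi_T\to\chi_{[0,T]}$, justified by the initial trace property (iv) applied to $\tilde\beta$, yields
\begin{equation*}
A_i(T)-A_i(0) = \int_0^T\!\int_{\T^3}\tilde\beta''(c_i)\,\nabla c_i\cdot c_i u_i\,\dx\,\dt,
\end{equation*}
and the analogous expression for $D_i$, after using $2c_i\nabla\sqrt{c_i}\cdot\sqrt{c_i}u_i=\nabla c_i\cdot c_i u_i$.

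The mixed terms $B_i$ and $C_i$ are the true doubling-of-variables step. For $B_i$, let $\rho_\epsilon$ be a standard space-time mollifier and set $\bar{c}_{i,\epsilon}:=\rho_\epsilon*\bar{c}_i$ (extending $\bar{c}_i$ smoothly below $t=0$). Applying \eqref{eq:weak-renorm} to $c_i$ with renormalization $\beta$ and test function $\phi_i=\bar{c}_{i,\epsilon}\chi_T$, and using that $\bar{c}_i$ satisfies its own continuity equation weakly so that $\partial_t\bar{c}_{i,\epsilon}=-[\nabla\cdot(\bar{c}_i\bar{u}_i)]_\epsilon$, an integration by parts in $x$ combined with duality of the mollifier transfers the divergence onto $(\beta(c_i))_\epsilon$. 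Sending $\epsilon\to 0$ and $\chi_T\to\chi_{[0,T]}$, using the regularities $\nabla\sqrt{c_i},\nabla\sqrt{\bar{c}_i}\in L^2_{\mathrm{loc}}$ and $\sqrt{c_i}u_i,\sqrt{\bar{c}_i}\bar{u}_i\in L^2_{\mathrm{loc}}$ (which imply $\nabla c_i,\nabla\bar{c}_i,c_iu_i,\bar{c}_i\bar{u}_i,\nabla\beta(c_i)\in L^2_{\mathrm{loc}}$) together with the uniform bounds $0\le c_i,\bar{c}_i\le 1$, yields the identity for $B_i$. The piece $C_i$ is treated by the symmetric procedure, swapping the roles of $c_i$ and $\bar{c}_i$.

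Substituting the four identities into $(A_i-B_i-C_i+D_i)(T)$ and regrouping — using $\nabla c_i\cdot c_i u_i=2c_i\nabla\sqrt{c_i}\cdot\sqrt{c_i}u_i$ and its $\bar{c}_i$-analogue to rewrite the $\beta''$-contributions in the divergence form on the target RHS — reproduces the claimed identity. The principal obstacle is the limit $\epsilon\to 0$ in the mixed contributions. The trilinear $\beta''$-term $\int c_i\beta''(c_i)\nabla\sqrt{c_i}\cdot\sqrt{c_i}u_i\,\bar{c}_{i,\epsilon}\chi_T$ passes by dominated convergence in $L^1_{\mathrm{loc}}$ because $\bar{c}_{i,\epsilon}\to\bar{c}_i$ almost everywhere under the uniform bound $0\le\bar{c}_{i,\epsilon}\le 1$; the delicate term $\int\beta(c_i)\partial_t\bar{c}_{i,\epsilon}\chi_T$ is handled by the above rewriting through the weak continuity equation for $\bar{c}_i$, which avoids $\partial_t\bar{c}_i$ as a pure distribution, and is then passed to the limit by $L^2$-duality between $\nabla\beta(c_i)$ and $\bar{c}_i\bar{u}_i$.
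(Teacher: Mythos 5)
Your argument is correct in substance, but it implements the identity by a genuinely different route than the paper. The paper performs a symmetric Kruzhkov-type doubling: it takes the weak formulation for $A=c_i-\bar c_i$ (i.e.\ $\beta=\mathrm{id}$) and the renormalized formulation for $B=\beta(c_i)-\beta(\bar c_i)$, tests each against a doubled test function $\varphi\big(\tfrac{x+y}{2},\tfrac{t+\tau}{2}\big)\phi^\epsilon(x,y,t,\tau)$ concentrating on the diagonal, adds, and passes $\epsilon\to0$ using Lemma \ref{young} and the initial-layer Lemma \ref{initialdata} (where the symmetric kernel contributes the factor $\tfrac12$ twice), before localizing in time with the cutoff \eqref{local}. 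You instead expand the product into the four pieces $c_i\beta(c_i)$, $\bar c_i\beta(c_i)$, $c_i\beta(\bar c_i)$, $\bar c_i\beta(\bar c_i)$, treat the diagonal pieces with the renormalization $\tilde\beta(s)=s\beta(s)\in\mathcal B$ and spatially constant test functions, and treat the cross terms DiPerna--Lions style by mollifying one solution and inserting it as a test function in the renormalized formulation of the other, rewriting $\partial_t\bar c_{i,\epsilon}$ through the weak continuity equation. The algebra checks out ($\tilde\beta''(c)\nabla c\cdot cu=\nabla[\beta(c)+c\beta'(c)]\cdot cu$, and the regrouping reproduces both terms on the right of \eqref{eq:relenid}), and the regularity you invoke ($\nabla\beta(c_i),\nabla\bar c_i, c_iu_i,\bar c_i\bar u_i\in L^2_{\mathrm{loc}}$, uniform bounds $0\le c_i,\bar c_i\le1$) is exactly what the paper's limit passage also uses. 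What your route buys is the avoidance of the product-space integrals and of the $\tfrac12$-factor bookkeeping; what it costs is extra care at the time boundaries, which you treat too lightly: ``extending $\bar c_i$ smoothly below $t=0$'' is not enough, since the identity $\partial_t\bar c_{i,\epsilon}=-[\nabla\cdot(\bar c_i\bar u_i)]_\epsilon$ must hold in the layer $0\le t<\epsilon$, which requires an extension satisfying the continuity equation across $t=0$ (e.g.\ the constant-in-time extension by $\bar c_i^0$ with the flux extended by zero, which does satisfy it precisely because of the initial term in \eqref{eq:weak} and property (iv)), or alternatively a one-sided time mollifier; with the constant extension one must also avoid relying on $\nabla\bar c_{i,\epsilon}\to\nabla\bar c_i$ near $t=0$, since $\nabla \bar c_i^0$ need not be a function. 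Likewise the evaluation at $t=T$ needs the same cutoff-plus-continuity argument the paper uses. These are fixable, standard points — they are exactly what the paper's Lemmas \ref{young} and \ref{initialdata} are designed to settle — so your proposal stands as a valid alternative proof once they are spelled out.
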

	
	Now, the choice of $\beta(s)=\ln(s+\delta)$, which is indeed in $C^2([0,\infty))$, in \eqref{eq:relenid} implies the following result:
	
	\begin{corollary}\label{corollary5}
		Let $\cc$ and $\bar{\cc}$ be renormalized solutions of \eqref{eq:mass}-\eqref{eq:linear-system}. Then, for any $T>0$ and $\delta>0$:
		\begin{eqnarray}\label{eq:id}
			&& F(\cc,\bar{\cc})(T)  \\ \nonumber &&= \sum_{i=1}^n\int_0^T \int_{\T^3} (c_i + \bar{c}_i+2 \delta) \ \nabla \big(\ln(c_i + \delta) - \ln(\bar{c}_i+\delta)\big) \cdot \left( \frac{c_i u_i}{c_i + \delta} - \frac{\bar{c}_i \bar{u}_i}{\bar{c}_i + \delta} \right) \dx \dt.
		\end{eqnarray} 
	\end{corollary}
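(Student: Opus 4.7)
The plan is to substitute $\beta(s)=\ln(s+\delta)$ into the identity \eqref{eq:relenid} of Theorem \ref{theorem4} and algebraically reduce the right-hand side to the form displayed in \eqref{eq:id}. Since $\delta>0$, the function $\beta$ is smooth on $[0,\infty)$ and in particular belongs to $\mathcal{B}=C^2([0,\infty))$, so Theorem \ref{theorem4} applies directly; under its hypothesis of equal initial data, the left-hand side of \eqref{eq:relenid} at $t=T$ coincides with $F(\cc,\bar{\cc})(T)$ by the definition \eqref{eq:symrelenapp}.

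To treat the right-hand side, I would introduce the shorthands $a=c_i+\delta$, $\bar a=\bar c_i+\delta$, $J=c_iu_i$, $\bar J=\bar c_i\bar u_i$, so that $\beta'(c_i)=1/a$, $\beta'(\bar c_i)=1/\bar a$, and $\nabla[\beta(c_i)-\beta(\bar c_i)]=\nabla c_i/a-\nabla\bar c_i/\bar a$. The first integrand on the right of \eqref{eq:relenid} then reads $(\nabla c_i/a-\nabla\bar c_i/\bar a)\cdot(J-\bar J)$, while for the second I would use the identities $(c_i-\bar c_i)/a=1-\bar a/a$ and $(c_i-\bar c_i)/\bar a=a/\bar a-1$ to compute
\[
\nabla\!\left(\frac{c_i-\bar c_i}{a}\right)=\frac{\bar a\,\nabla c_i-a\,\nabla \bar c_i}{a^{2}}\,,\qquad
\nabla\!\left(\frac{c_i-\bar c_i}{\bar a}\right)=\frac{\bar a\,\nabla c_i-a\,\nabla \bar c_i}{\bar a^{2}}\,.
\]
After this substitution, the full integrand on the right of \eqref{eq:relenid} becomes a linear combination of the four bilinear quantities $\nabla c_i\!\cdot\!J$, $\nabla\bar c_i\!\cdot\!J$, $\nabla c_i\!\cdot\!\bar J$, $\nabla\bar c_i\!\cdot\!\bar J$. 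A short bookkeeping of their coefficients (one checks for instance that the coefficient of $\nabla c_i\!\cdot\!J$ is $\tfrac{1}{a}+\tfrac{\bar a}{a^{2}}=\tfrac{a+\bar a}{a^{2}}$, and analogously for the other three) produces the common factor $a+\bar a=c_i+\bar c_i+2\delta$ together with the tensor factorization
\[
(a+\bar a)\left(\frac{\nabla c_i}{a}-\frac{\nabla\bar c_i}{\bar a}\right)\cdot\left(\frac{J}{a}-\frac{\bar J}{\bar a}\right),
\]
which is exactly the integrand on the right of \eqref{eq:id}. Summing over $i$ and integrating over $\torusd\times(0,T)$ completes the reduction.

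I do not anticipate any serious obstacle here: the condition $\delta>0$ keeps $a,\bar a\ge\delta$ uniformly bounded away from zero, so no integrability issue arises near $c_i=0$ or $\bar c_i=0$, and the regularity recorded in Definition \ref{def-renorm}\,(ii), namely $\nabla\sqrt{c_i}\in L^{2}_{\mathrm{loc}}$ and $\sqrt{c_i}u_i\in L^{2}_{\mathrm{loc}}$, is more than sufficient for each bilinear term in the expansion to be integrable on $\torusd\times(0,T)$. The only mildly delicate step is the algebraic bookkeeping that collapses all cross contributions into the clean product form above.
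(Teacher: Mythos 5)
Your proposal is correct and takes essentially the same route as the paper: substitute $\beta(s)=\ln(s+\delta)$ into Theorem \ref{theorem4} (using equal initial data) and algebraically collapse the two integrands of \eqref{eq:relenid} into the factored form, with all terms integrable because $\delta>0$ keeps $c_i+\delta,\ \bar c_i+\delta\ge\delta$ and $\nabla\sqrt{c_i},\ \sqrt{c_i}u_i\in L^2_{\mathrm{loc}}$. The only cosmetic difference is that you expand into the four bilinear terms and match coefficients, while the paper factors directly through the quotients $(\bar c_i+\delta)/(c_i+\delta)$ and $(c_i+\delta)/(\bar c_i+\delta)$; the content is identical.
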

	
	We remark that up to  Corollary \ref{corollary5} only the transport structure \eqref{eq:mass} was used. The question becomes how the
	formal identity \eqref{eq:entropysystem1} is affected by the introduction of the corrections in the logarithmic terms. 
To achieve this, a change of variables is quite helpful, namely, the functions  $d_i = c_i + \delta$ and $v_i = \frac{c_i u_i}{c_i + \delta}$ 
satisfy an approximate Maxwell-Stefan system in the form \eqref{eq:shifted main}. This motivates to express the error estimate for the new system.
A careful estimation of the terms involved in \eqref{eq:id} yields the uniqueness of nonnegative renormalized solutions:
	
	\begin{theorem}\label{theorem-uniqueness}
		Let $\cc$ and $\bar{\cc}$ be renormalized solutions of \eqref{eq:mass}-\eqref{eq:linear-system} emanating from the same initial data. Then, if $c_iu_i$ and $\bar c_i\bar u_i$ are in $L^\infty({\T^3}\times(0,\infty))$, there exists at most one nonnegative renormalized solution to the Maxwell-Stefan system \eqref{eq:mass}-\eqref{eq:linear-system}.
	\end{theorem}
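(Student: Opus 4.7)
The plan is to treat the identity in Corollary \ref{corollary5} as the starting differential relation for the regularized symmetrized relative entropy $F(\cc,\bar{\cc})$, derive a Gronwall-type inequality whose error vanishes as $\delta\to 0$, and then let $\delta\to 0^+$ to force uniqueness. The first technical step is to eliminate the gradients on the right hand side of \eqref{eq:id} by substituting the Maxwell-Stefan linear system \eqref{eq:linear-system}, which gives $\nabla c_i=-\sum_{j\ne i}\frac{c_ic_j}{D_{ij}}(u_i-u_j)$ and the analogous formula for $\bar c_i$. Writing $\alpha_i:=c_i/(c_i+\delta)\in[0,1]$, $\bar\alpha_i:=\bar c_i/(\bar c_i+\delta)$, and working in the shifted variables $d_i=c_i+\delta$, $v_i=\alpha_i u_i$ that satisfy the approximate Maxwell-Stefan system \eqref{eq:shifted main}, the integrand of \eqref{eq:id} becomes a bilinear expression in the velocity differences $u_i-u_j$ and $\bar u_i-\bar u_j$. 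Exchanging the indices $i\leftrightarrow j$ and exploiting $D_{ij}=D_{ji}$, one isolates the principal dissipation
\begin{equation*}
    Q_\delta \ := \ \sum_{i,j}\int_0^T\!\!\int_{\torusd} \frac{1}{2D_{ij}}\bigl(c_i c_j+\bar c_i\bar c_j\bigr)\bigl|(u_i-\bar u_i)-(u_j-\bar u_j)\bigr|^2 \,\dx\,\dt,
\end{equation*}
which converges to the $Q$ of \eqref{eq:Gronwall} as $\delta\to 0$, plus remainder terms $E_1^\delta$ and $E_2^\delta$: the first gathers the ``cross'' contributions already present in the formal calculation (products of the form $c_i\bar c_j-\bar c_i c_j$), while the second gathers the genuinely $\delta$-dependent residues generated by the factors $\alpha_i-1=-\delta/(c_i+\delta)$ and by the additive $2\delta$ in the prefactor $c_i+\bar c_i+2\delta$.

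The heart of the proof is the estimation of $E_1^\delta+E_2^\delta$. For the cross term $E_1^\delta$ I would invoke the spectral gap of the Maxwell-Stefan friction operator restricted to the flux-sum constraint, established in \cite{HuJuTz22} and already powering the weak-strong argument: it provides a lower bound on $Q_\delta$ in terms of squared $L^2$ norms of projected velocity differences and, after Cauchy-Schwarz, allows $|E_1^\delta|$ to be absorbed into $\tfrac12 Q_\delta$ modulo a residue controlled by $\int_0^T F(\cc,\bar{\cc})(t)\,\dt$. For the $\delta$-dependent residue $E_2^\delta$ the hypothesis $c_iu_i,\,\bar c_i\bar u_i\in L^\infty(\torusd\times(0,\infty))$ is essential: the factors $\alpha_i-1$ are bounded pointwise by $1$ and supported on $\{c_i\lesssim\delta\}$, while the fluxes keep $c_iu_i/(c_i+\delta)$ bounded uniformly in $\delta$, so each such factor generates a contribution of order $\delta$ in $L^1$. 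I expect this step to be the main obstacle, since the $\delta$-regularization distorts the algebraic cancellations underlying both the spectral-gap argument and the energy identity, and reconciling the two requires a careful pointwise splitting of the integrand of \eqref{eq:id} into a part dominated by the integrand of $F$ and a part absorbed by $Q_\delta$.

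Combining these estimates yields an inequality of the form
$F(\cc,\bar{\cc})(T) + \tfrac12 Q_\delta \leq C\int_0^T F(\cc,\bar{\cc})(t)\,\dt + \eta(\delta)$,
with $C$ independent of $\delta$ and $\eta(\delta)\to 0$ as $\delta\to 0^+$. Since $\cc$ and $\bar{\cc}$ share their initial data, $F(\cc,\bar{\cc})(0)=0$, and Gronwall's inequality gives $F(\cc,\bar{\cc})(T)\leq \eta(\delta)\,e^{CT}$ for every $\delta>0$ and $T>0$. On any set where the supports of $c_i$ and $\bar c_i$ differ the integrand of $F$ diverges as $\delta\to 0$ (since $(\ln(c_i+\delta)-\ln\delta)\,c_i\to+\infty$ when $c_i>0=\bar c_i$), so uniform smallness of $F$ forces the supports to coincide almost everywhere. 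On the common support, Fatou's lemma applied to the pointwise limit then yields $H^{sym}(\cc,\bar{\cc})(T)\leq 0$, and nonnegativity of its integrand gives $c_i=\bar c_i$ almost everywhere on $\torusd\times(0,T)$, which is the desired uniqueness.
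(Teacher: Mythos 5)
Your setup (the identity of Corollary \ref{corollary5}, the shifted variables $d_i=c_i+\delta$, $v_i=c_iu_i/(c_i+\delta)$, the isolation of the dissipation, and the use of the spectral gap from \cite{HuJuTz22}) matches the paper, but the core of your argument rests on a Gr\"onwall inequality $F(\cc,\bar{\cc})(T)+\tfrac12 Q_\delta\le C\int_0^T F\,\dt+\eta(\delta)$ with $C$ \emph{independent of $\delta$} and $\eta(\delta)\to0$, and this is precisely what the available estimates do not give. Under the hypothesis the fluxes $c_iu_i$ are bounded but the velocities $u_i$ are not, so the only uniform bound on the shifted velocities is $\|v_i\|_{L^\infty}\le\|cu\|_{L^\infty}/\delta$ (see \eqref{estimation:v}). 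This $1/\delta$ propagates into every place where you must control a term linear in $\bar v$ or $v$ against the quadratic dissipation: the kernel component $|\mathbb{P}_{L^\perp}\YY|^2$ in the spectral-gap step, and the cross terms $J_1,J_2$, all produce residues of size $\delta^{-2}$ (and $\delta^{-4}$ from the $\delta$-terms) times $\sum_j|c_j-\bar c_j|^2$, hence a Gr\"onwall constant that blows up as $\delta\to0$. Likewise your claim that the genuinely $\delta$-dependent residue $E_2^\delta$ is $O(\delta)$ in $L^1$ is false as stated: a typical such term is $\delta\sum_i\bigl(\sum_{j\ne i}\tfrac1{D_{ij}}\bigr)(d_i+\bar d_i)|v_i-\bar v_i|^2$, whose integrand can be of order $\delta^{-1}$; it is not small, it can only be \emph{absorbed} into the dissipation by taking $\delta$ smaller than a fixed multiple of $\mu$. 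Since the uniform-in-$\delta$ inequality is not available, the final limit $\delta\to0^+$, the support argument and the Fatou step have no foundation.

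The paper closes the argument differently, and this is the missing idea: one never sends $\delta\to0$. Fix $\delta$ small enough that all terms proportional to $\delta$ times the dissipation are absorbed ($0<\delta<\min\{1,\mu/(4C_4)\}$); everything that remains on the right-hand side is bounded by $\tfrac{C}{\delta^4}\sum_i\int_0^T\!\!\int_{\torusd}|d_i-\bar d_i|^2\,\dx\,\dt$, with $C$ depending on $\|cu\|_{L^\infty}$ but the $\delta$-dependence harmless because $\delta$ is now fixed. The loop is then closed by the pointwise Csisz\'ar--Kullback-type inequality $|d_i-\bar d_i|^2\le(\ln d_i-\ln\bar d_i)(d_i-\bar d_i)$ (valid since $\delta\le d_i,\bar d_i\le2$), which bounds the $L^2$ distance at time $T$ by $F(\cc,\bar{\cc})(T)$; Gr\"onwall applied to $\sum_i\int|d_i-\bar d_i|^2\dx$ with the fixed (large) constant $C/\delta^4$ and zero initial data gives $d_i\equiv\bar d_i$, i.e.\ $c_i\equiv\bar c_i$, directly for that fixed $\delta$, with no limiting procedure and no discussion of supports. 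If you want to salvage your $\delta\to0$ strategy you would have to prove $\delta$-uniform bounds on the velocity differences, which essentially amounts to assuming $u_i\in L^\infty$ rather than $c_iu_i\in L^\infty$, a strictly stronger hypothesis than the theorem's.
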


    As a consequence, the uniqueness of weak solutions follows:
    
    \begin{corollary} \label{thm-weak}
		Let $\cc$ and $\bar{\cc}$ be weak solutions of \eqref{eq:mass}-\eqref{eq:linear-system} emanating from the same initial data and suppose that
		$c_iu_i$ and $\bar c_i\bar u_i$ are in $L^\infty({\T^3}\times(0,\infty))$. 
		Then, $\cc =\bar{\cc}$, i.e. weak solutions are unique.
	\end{corollary}
	\color{black}
 
	The paper is organized as follows: In section \ref{wk-ren} we show that weak and renormalized solutions coincide (cf. Theorem~\ref{thm-weakrenorm}). For the proof of Theorem \ref{thm-weakrenorm}, we make use of two lemmas for generic transport equations. In section \ref{sec3}, the method of doubling the variables is used to derive a relative entropy identity for renormalized solutions, thus showing Theorem \ref{theorem4}. In section \ref{sec5}, we perform the calculations that deal with vanishing solutions and lead to Corollary \ref{corollary5}, and we present the necessary preparations introducing the shifted solution. Then in section \ref{sec6} we exploit the frictional structure of the Maxwell-Stefan inversion matrix, in order to show Theorem \ref{theorem-uniqueness}. We conclude the paper with a brief discussion of the results and their implications in section \ref{sec:discussion}.

%-----------------------------------------------------
%-----------------------------------------------------
%-----------------------------------------------------
%-----------------------------------------------------

\section{Equivalence between weak and renormalized solutions}\label{wk-ren}

It is evident from Definition \ref{def-renorm} that weak solutions are a special case of renormalized solutions. 
In this section, we investigate the other direction (weak $\Rightarrow$ renormalized) and prove Theorem \ref{thm-weakrenorm}.

    We first prove a result for generic transport equations, which is inspired by \cite[Lemmas 4 and 5]{Fi15} and which is used in the proof of Theorem \ref{thm-weakrenorm}.

\begin{lemma} \label{lemma1}
    Let $u \in C^0([0,T];L^2(\T^3))$, with $u\in L^2(0,T;H^1(\T^3))$ and $u^0 \in L^2(\T^3)$, and $z \in L^2(0,T;(L^2(\T^3))^3)$, be such that for all $\psi \in C_c^\infty({\T^3} \times [0,T))$
    
    \begin{equation} \label{1}
        - \int_0^T \int_{\T^3} u \del_t\psi \, \dx\dt - \int_{\T^3} u^0 \psi(x,0) \dx = \int_0^T \int_{\T^3} z \cdot \nabla \psi \, \dx\dt.
    \end{equation}

    \noindent
    Let $\mathcal{B} = \{ \beta \in C^2(\R; \R) \textnormal{ s.t. } |\beta(w)| \leq C |w|^2, \; |\beta'(w)| \leq C |w|, \; |\beta''(w)| \leq C  \}$. Then, for $\beta \in \mathcal{B}$ and $\psi \in C_c^\infty({\T^3} \times [0,T))$, there holds
    
    \[ \begin{split}
        & - \int_0^T \int_{\T^3} \beta(u) \del_t\psi \, \dx\dt - \int_{\T^3} \beta(u^0) \psi(x,0) \dx \\ 
        & \phantom{xxxxxxxxxx} = \int_0^T \int_{\T^3} \beta'(u) z \cdot \nabla\psi \, \dx\dt + \int_0^T \int_{\T^3} \psi \beta''(u) \nabla u \cdot z \, \dx\dt.
    \end{split} \]
\end{lemma}

\begin{proof}
    Let $\rho_\sigma$ denote a standard symmetric mollifier with respect to space. We choose the test function $\rho_\sigma * \psi$ in \eqref{1} and integrate by parts (in space) on the right-hand side, to obtain for all $\psi \in C_c^\infty({\T^3} \times [0,T))$

    \begin{equation} \label{2}
        - \int_0^T \int_{\T^3} (\rho_\sigma * u) \del_t\psi \, \dx\dt - \int_{\T^3} (\rho_\sigma * u^0) \psi(x,0) \dx = - \int_0^T \int_{\T^3} \diver(\rho_\sigma * z) \psi \, \dx\dt.
    \end{equation}

    Define the functions $f^\sigma,g^\sigma:\T^3 \times [-T,T]\to\mathbb{R}$ by:
        \begin{equation}\label{ext1}
            f^\sigma(x,t) = \begin{cases}
                (\rho_\sigma * u)(x,t), & \quad t > 0 \\
                (\rho_\sigma * u^0)(x), & \quad t < 0
            \end{cases} 
        \end{equation} and
        \begin{equation}\label{ext2}
            g^\sigma(x, t) = \begin{cases}
                \diver(\rho_\sigma * z)(x,t), & \quad t > 0 \\
                0, & \quad t < 0
            \end{cases}
        \end{equation}
    \noindent
    and notice that $f^\sigma,g^\sigma \in L^2(-T,T;L^2(\T^3))$. Then, for all $\varphi \in C_c^\infty({\T^3} \times (-T,T))$, $f^\sigma$ and $g^\sigma$ satisfy, due to \eqref{2},
    
    \begin{equation} \label{3}
        - \int_{-T}^T \int_{\T^3} f^\sigma(x,t) \del_t\varphi \, \dx\dt = -\int_{-T}^T \int_{\T^3} g^\sigma(x,t) \varphi \, \dx\dt.
    \end{equation} 

    Let, now, $\rho_\tau$ be a standard symmetric mollifier with respect to time. Using $\rho_\tau * \varphi$ as a test function in \eqref{3}, we get for all $\varphi \in C^\infty_c(\T^3\times(-T+2\tau,T+2\tau))$
    
    \[ - \int_{-T}^T \int_{\T^3} (\rho_\tau * f^\sigma) \del_t\varphi \, \dx\dt = -\int_{-T}^T \int_{\T^3} (\rho_\tau * g^\sigma) \varphi \, \dx\dt, \] 
    
    \noindent
    which implies that $\rho_\tau * f^\sigma \in C^1((-T+\tau,T-\tau);L^1({\T^3}))$, with the representation
    
    \[ \frac{\textnormal{d}}{\dt}(\rho_\tau * f^\sigma) = -\rho_\tau * g^\sigma, \quad \textnormal{in the $L^1({\T^3})$ sense}. \]

    Now, for $\beta \in \mathcal{B}$, we have by the chain rule
    
    \begin{equation} \label{ren}
        - \int_{-T}^T \int_{\T^3} \beta(\rho_\tau * f^\sigma) \del_t\varphi \, \dx\dt = -\int_{-T}^T \int_{\T^3}  \beta'(\rho_\tau * f^\sigma) (\rho_\tau * g^\sigma) \varphi \, \dx\dt
    \end{equation} 

    \noindent
    for all $\varphi \in C_c^\infty({\T^3} \times (-T+2\tau,T-2\tau))$.
    
    Keeping $\sigma>0$ fixed, we let $\tau\to0$. For the term on the left-hand side, $\beta$ is continuous and $\rho_\tau * f^\sigma \to f^\sigma$ a.e., hence $\beta(\rho_\tau * f^\sigma)\to\beta(f^\sigma)$ a.e. and due to $f^\sigma$ being in  $L^2(-T,T;L^2(\T^3))$ and the growth of $\beta$, we have \begin{align*}
        & |\beta(\rho_\tau * f^\sigma)\del_t\varphi| \leq C |\rho_\tau * f^\sigma|^2 \|\del_t\varphi\|_\infty \quad \textnormal{and} \\
        & \rho_\tau * f^\sigma \to f^\sigma, \textnormal{ in $L^2(K)$ for any compact $K\subset \T^3 \times (-T,T)$}.
    \end{align*} 
    
    \noindent
    Similarly, for the right-hand side, since $\rho_\tau * g^\sigma \to g^\sigma$ a.e. and $\rho_\tau * f^\sigma \to f^\sigma$ a.e. and $\beta'$ is continuous, we have $\beta'(\rho_\tau * f^\sigma) (\rho_\tau * g^\sigma) \to \beta'( f^\sigma) g^\sigma$ a.e. and due to the growth of $\beta'$ and $f^\sigma, g^\sigma$ being in $L^2(-T,T;L^2(\T^3))$, we have \begin{align*}
        & |\beta'(\rho_\tau * f^\sigma) (\rho_\tau * g^\sigma) \varphi| \leq C|\rho_\tau * f^\sigma||\rho_\tau * g^\sigma|\|\varphi\|_\infty \quad \textnormal{and} \\
        & \rho_\tau * f^\sigma \to f^\sigma, \textnormal{ in $L^2(K)$ for any compact $K\subset \T^3 \times (-T,T)$}, \\
        & \rho_\tau * g^\sigma \to g^\sigma, \textnormal{ in $L^2(K)$ for any compact $K\subset \T^3 \times (-T,T)$}.
    \end{align*} 
    
    Letting $\tau \to 0$ we can apply the dominated convergence theorem to deduce that

    \[ - \int_{-T}^T \int_{\T^3} \beta(f^\sigma) \del_t\varphi \, \dx\dt = -\int_{-T}^T \int_{\T^3}  \beta'(f^\sigma) (g^\sigma) \varphi \, \dx\dt, \]

    \noindent
    for all $\varphi \in C_c^\infty({\T^3} \times (-T,T))$ and because of \eqref{ext1}-\eqref{ext2} we recover
    
    \[ - \int_0^T \int_{\T^3} \beta(\rho_\sigma * u) \del_t\varphi \, \dx\dt - \int_{\T^3} \beta(\rho_\sigma * u^0) \varphi(x,0) \, \dx = -\int_0^T \int_{\T^3} \beta'(\rho_\sigma * u) \diver(\rho_\sigma * z) \varphi \, \dx\dt. \]

    We, now, integrate by parts on the right-hand side

    \[ \begin{split}
        & - \int_0^T \int_{\T^3} \beta(\rho_\sigma * u) \del_t\varphi \, \dx\dt - \int_{\T^3} \beta(\rho_\sigma * u^0) \varphi(x,0) \, \dx \\
        & \phantom{xx} = \int_0^T \int_{\T^3} \beta''(\rho_\sigma * u) \nabla (\rho_\sigma * u) \cdot (\rho_\sigma * z) \varphi \, \dx\dt + \int_0^T\int_{\T^3}\beta'(\rho_\sigma * u) (\rho_\sigma * z) \cdot \nabla\varphi \, \dx\dt
    \end{split} \] and let $\sigma\to0$. For the terms on the left-hand side, we have $\rho_\sigma * u \to u$ a.e. and the continuity of $\beta$ implies that $\beta(\rho_\sigma * u) \to \beta(u)$ a.e.. Moreover, by $u\in L^2(0,T;L^2(\T^3))$ and the growth condition on $\beta$, we see that 
    
    \begin{align*}
        & |\beta(\rho_\sigma * u) \del_t\varphi| \leq C|\rho_\sigma * u|^2 \| \del_t\varphi\|_\infty, \textnormal{ where} \\
        & \rho_\sigma * u \to u \textnormal{ in $L^2(K)$, for any compact $K \subset \T^3 \times (0,T)$}.
    \end{align*}

    For the first term on the right-hand side, we have $\beta''(\rho_\sigma * u) \nabla (\rho_\sigma * u) \cdot (\rho_\sigma * z) \to \beta''(u) \nabla u \cdot z$ a.e. and use the growth condition $|\beta''(w)|\leq C$ and the fact that $z, \nabla u \in L^2(0,T;(L^2(\T^3))^3)$ to get 
    
    \begin{align*}
        & |\beta''(\rho_\sigma * u) \nabla (\rho_\sigma * u) \cdot (\rho_\sigma * z)\varphi| \leq C |\nabla (\rho_\sigma * u)||\rho_\sigma * z|\|\varphi\|_\infty, \textnormal{ where} \\
        &\nabla(\rho_\sigma * u) \to \nabla u \textnormal{ and } \rho_\sigma * z \to z \textnormal{ in $L^2(K)$ for any compact $K \subset \T^3\times(0,T)$}.
    \end{align*}
    
    Finally, for the last term on the right-hand side $\beta'(\rho_\sigma * u)(\rho_\sigma * z) \to \beta'(u) z$ a.e. and since $|\beta'(w)| \leq C|w|$, $u\in L^2(0,T;L^2(\T^3))$ and $z\in L^2(0,T;(L^2(\T^3))^3)$ we obtain 

    \begin{align*}
        & |\beta'(\rho_\sigma * u)(\rho_\sigma * z)\cdot \nabla\varphi| \leq C |\rho_\sigma * u||\rho_\sigma * z|\|\nabla \varphi\|_\infty, \textnormal{ where} \\
        &\rho_\sigma * u \to u \textnormal{ and } \rho_\sigma * z \to z \textnormal{ in $L^2(K)$ for any compact $K \subset \T^3\times(0,T)$}.
    \end{align*}
    
    By the dominated convergence theorem, taking the limit $\sigma\to0$, we conclude

    \[ \begin{split}
        & - \int_0^T \int_{\T^3} \beta(u) \del_t\varphi \, \dx\dt - \int_{\T^3} \beta(u^0) \varphi(x,0) \, \dx \\
        & \phantom{xxxxxxxx} = \int_0^T \int_{\T^3} \beta''(u) \nabla u \cdot z \varphi \, \dx\dt + \int_0^T\int_{\T^3}\beta'(u) z \cdot \nabla\varphi \, \dx\dt.
    \end{split} \]
\end{proof}

    If the solution satisfies the bounds $0 \leq u \leq M$, for some $M>0$, the growth conditions on $\beta$ and its derivatives are not needed:

    \begin{lemma} \label{weak-renorm}
        Let $u \in C([0,T];L^2(\T^3)) \cap L^2(0, T; H^1(\T^3))$, with $0 \leq u(x,t) \leq M$, for some $M > 0$, $u^0 \in L^2(\T^3)$ and $z \in L^2(0, T; (L^2(\T^3))^3)$ satisfy for all $\varphi \in C_c^\infty(\T^3 \times [0, T))$

        \[ - \int_0^T \int_{\T^3} u \del_t\varphi \, \dx\dt - \int_{\T^3} u^0 \varphi(x,0) \dx = \int_0^T \int_{\T^3} z \cdot \nabla\varphi \, \dx\dt. \]

        \noindent
        Let $\beta \in C^2([0,\infty))$. Then, for all $\varphi \in C_c^\infty(\T^3 \times [0, T))$, there holds

        \[ \begin{split}
        & - \int_0^T \int_{\T^3} \beta(u) \del_t\varphi \, \dx\dt - \int_{\T^3} \beta(u^0) \varphi(x,0) \dx \\ 
        & \phantom{xxxxxxxxxx} = \int_0^T \int_{\T^3} \beta'(u) z \cdot \nabla\varphi \, \dx\dt + \int_0^T \int_{\T^3} \beta''(u) \nabla u \cdot z \varphi \, \dx\dt.
    \end{split} \]
    \end{lemma}

    \begin{proof}
        As in the proof of Lemma \ref{lemma1}, we derive \eqref{ren} and we pass to the limit $\tau \to 0$. 
        The limit is done via a similar (but simpler) argument using the bound $0 \leq u \leq M$ and the continuity of $\beta,\beta'$ and $\beta''$. In particular, since $u$ and $u^0$ take values between $0$ and $M$, the same holds for $\rho_\tau * f^\sigma$, and the continuity of $\beta$ and $\beta'$ implies that $\beta(\rho_\tau * f^\sigma)$ and $\beta'(\rho_\tau * f^\sigma)$ are bounded. This is enough to apply the dominated convergence theorem  to \eqref{ren} and pass to the limit $\tau\to0$. We, then, proceed as in the proof of Lemma \ref{lemma1} and use the same idea for the limit $\sigma\to0$.
    \end{proof}

\noindent
{\it Proof of Theorem \ref{thm-weakrenorm}.}
Let $\cc$ a weak solution satisfying (i)-(iv) of Definition \ref{def-weak}. For $\cc$ to be a renormalized solution, it has to satisfy (i)-(iv) of Definition \ref{def-renorm}. Conditions (i), (ii) are clear and imply $c_i \in L^2(0,T;H^1(\T^3))$ and $c_iu_i \in L^2(0,T;(L^2(\T^3))^3)$. Moreover, $c_i \in C([0,T];L^2(\T^3))$ (see \eqref{regularwk}). This allows for a direct application of Lemma \ref{weak-renorm} with $u=c_i$ and $z=c_iu_i$, which implies that $\cc$ satisfies \eqref{eq:weak-renorm} (notice that $2c_i\nabla\sqrt{c_i}\cdot\sqrt{c_i}u_i = \nabla c_i \cdot c_iu_i$). Finally, since $\beta\in C^2$, we have $\beta$ is Lipschitz continuous, hence (iv) of Definition \ref{def-renorm} follows due to condition (iv) of Definition \ref{def-weak}. Hence,  $\cc$ is a renormalized solution.
\hfill $\square$ 
\break

%-----------------------------------------------------
%-----------------------------------------------------
%-----------------------------------------------------
%-----------------------------------------------------

\section{Derivation of a symmetrized relative entropy identity}\label{sec3}

Let $c_i,\bar c_i$ be two renormalized solutions and we proceed to establish Theorem \ref{theorem4}. By the weak formulation for the difference $A(x,t):=c_i(x,t)-\bar{c}_i(x,t)$, corresponding to $\beta=\textnormal{id}$, we have  \begin{equation}\label{wk1}
    \int_\torusd A(x,0)\phi(x,0)\dx + \int_0^\infty \int_\torusd [A(x,t)\del_t\phi(x,t)+C(x,t)\cdot\nabla_x\phi(x,t)]\dx\dt = 0,
\end{equation} where $C(x,t):=c_iu_i(x,t)-\bar c_i\bar u_i(x,t)$ and $\phi (x,t) \in C^1_c (Q_\infty)$ a test function compactly supported in
$Q_\infty =  \torus^3 \times [0,\infty)$.
Likewise, write the weak formulation for the difference $B(y,\tau):=\beta(c_i(y,\tau))-\beta(\bar{c}_i(y,\tau))$, for a generic $\beta$, namely \begin{equation}\label{wk2}
    \begin{split}
        \int_\torusd B(y,0)\phi(y,0)\dy & + \int_0^\infty\int_\torusd B(y,\tau)\del_\tau\phi(y,\tau)+D(y,\tau)\cdot\nabla_y\phi(y,\tau)]\dy\dtau \\
        & \phantom{xxx} + \int_0^\infty \int_\torusd E(y,\tau)\phi(y,\tau)\dy\dtau = 0,
    \end{split}
\end{equation} where $\phi(y, \tau) \in C^1_c(Q_\infty)$,
$$
\begin{aligned}
D(y,\tau) &= \beta'(c_i(y,\tau))c_iu_i(y,\tau)-\beta'(\bar c_i(y,\tau))\bar c_i\bar u_i(y,\tau)
\\[5pt]
E(y,\tau) &= 2\sqrt{c_i}\beta''(c_i)\nabla\sqrt{c_i}\cdot c_iu_i-2\sqrt{\bar c_i}\beta''(\bar c_i)\nabla\sqrt{\bar c_i}\cdot \bar c_i \bar u_i. 
\end{aligned}
$$

Let $\varphi (z,s) \in C^1_c (Q_\infty)$, that is $\phi$ is periodic in space and  compactly supported on $\torusd\times [0,\infty)$.
Let $\rho(\sigma) \ge 0$ be a symmetric mollifier, supported in the ball centered at $0$ with radius $1$,  with $\int \rho(\sigma)  d\sigma = 1$.
Define 
\begin{equation} \begin{split}
    {\Phi}(x,t ; y , \tau) & = \varphi \left(\frac{x+y}{2},\frac{t+\tau}{2} \right) \;  \frac{1}{\epsilon}\rho\left(\frac{t-\tau}{\epsilon}\right)\prod_{\alpha=1}^3\frac{1}{\epsilon}\rho\left(\frac{x_\alpha-y_\alpha}{\epsilon}\right) 
    \label{testfunction}
    \\
    & =: \varphi \left(\frac{x+y}{2},\frac{t+\tau}{2} \right) \; \phi^\epsilon(x,y,t,\tau)
\end{split} 
\end{equation}
and note that $\Phi \in C^1_c ( Q_\infty \times Q_\infty)$ and if $z$ and $s$ denote the first and second argument of $\varphi$ respectively, we have 
\begin{align}
\label{id1}
    (\nabla_x+\nabla_y) {\Phi}=(\nabla_z  \varphi) \Big (\frac{x+y}{2},\frac{t+\tau}{2} \Big)  \;     \phi^\epsilon \\
\label{id2}
    (\del_t+\del_\tau) {\Phi}= (\del_s\varphi )\Big (\frac{x+y}{2},\frac{t+\tau}{2} \Big)  \; \phi^\epsilon.
\end{align}
$\Phi$ is jointly periodic in space: if $p\in\R^3$ is a period then $\Phi(x+p,t;y+p,\tau)  = \Phi(x,t;y,\tau)$.

Now, in \eqref{wk1} we employ the test function $\phi_B(x,t) = \int_0^\infty \int_{\T^3}B(y,\tau)\Phi(x,t;y,\tau)\dy\dtau$ 
 which is in $C_c^1(Q_\infty)$, since 
\begin{align*}
    \phi_B(x+p,t) & = \int_0^\infty \int_{\T^3}B(y,\tau)\Phi(x+p,t;y,\tau)\dy\dtau \\
    & = \int_0^\infty \int_{\T^3-p}B(y+p,\tau)\Phi(x+p,t;y+p,\tau)\dy\dtau \\
    & = \int_0^\infty \int_{\T^3}B(y,\tau)\Phi(x,t;y,\tau)\dy\dtau \\
    & = \phi_B(x,t). 
\end{align*}
The resulting
identity is integrated over $\torusd \times [0,\infty)$ and gives, using \eqref{id1},
 \[ 
 \begin{split}
    &\int_0^\infty \int_\torusd \int_0^\infty \int_\torusd A(x,t)B(y,\tau) (\del_t \Phi )
    + C(x,t)B(y,\tau) \cdot \Big( \nabla_z\varphi\big(\tfrac{x+y}{2},\tfrac{t+\tau}{2}\big)\phi^\epsilon-\nabla_y{\Phi} \Big )   \dx\dt\dy\dtau
    \\
    &\qquad =  - \int_0^\infty \int_\torusd \int_\torusd A(x,0)B(y,\tau)\Phi (x,0; y, \tau) \dx\dy\dtau  \, .\\
\end{split} \] 
Since $B(y,\tau)=\beta(c_i(y,\tau))-\beta(\bar c_i(y,\tau))\in H^1(\torusd)$ for $\tau$ fixed, we can integrate by parts in $y$ and use the periodic boundary conditions
to get 
\[ \begin{split}
    & \int_0^\infty\int_\torusd\int_\torusd A(x,0)B(y,\tau)\varphi\big(\tfrac{x+y}{2},\tfrac{\tau}{2}\big)\phi^\epsilon(x,y,0,\tau)\dx\dy\dtau \\
    & = - \int_0^\infty\int_\torusd\int_0^\infty\int_\torusd A(x,t)B(y,\tau) \del_t \big ( \Phi (x,t ; y, \tau ) \big ) \dx\dt\dy\dtau \\
    & - \int_0^\infty\int_\torusd\int_0^\infty\int_\torusd \left[C(x,t)B(y,\tau)\cdot (\nabla_z\varphi )\big(\tfrac{x+y}{2},\tfrac{t+\tau}{2}\big)\phi^\epsilon+C(x,t)\cdot\nabla_yB(y,\tau){\Phi}\right]\dx\dt\dy\dtau.
\end{split} \]

Similarly, we choose the test function $\phi_A(y,\tau) = \int_0^\infty\int_\torusd A(x,t)\Phi(x,t;y,\tau)\dx\dt$ in \eqref{wk2}, which is $C_c^1(Q_\infty)$ 
and follow similar steps to arrive at 
\[ \begin{split}
    & \int_0^\infty\int_\torusd \int_\torusd A(x,t)B(y,0)  \varphi\big(\tfrac{x+y}{2},\tfrac{\tau}{2}\big)\phi^\epsilon(x,y,t,0)  \dx\dy\dt \\
    & = - \int_0^\infty\int_\torusd \int_0^\infty\int_\torusd A(x,t)B(y,\tau)\del_\tau  \big ( \Phi (x,t ; y, \tau ) \big )   +  A(x,t)E(y,\tau) {\Phi} \; \dx\dt\dy\dtau \\
    & - \int_0^\infty\int_\torusd \int_0^\infty\int_\torusd \left[A(x,t)D(y,\tau)\cdot(\nabla_z\varphi )\big(\tfrac{x+y}{2},\tfrac{t+\tau}{2}\big)\phi^\epsilon +D(y,\tau)\cdot\nabla_xA(x,t) {\Phi}\right]\dx\dt\dy\dtau.
\end{split} \] 
Adding the two equations and using \eqref{id2}, we get 
\begin{align}
& \int_0^\infty\int_\torusd\int_\torusd A(x,0)B(y,\tau)\varphi\big(\tfrac{x+y}{2},\tfrac{\tau}{2}\big)\phi^\epsilon(x,y,0,\tau)\dx\dy\dtau  
\nonumber
\\
& + \int_0^\infty\int_\torusd \int_\torusd A(x,t)B(y,0)  \varphi\big(\tfrac{x+y}{2},\tfrac{\tau}{2}\big)\phi^\epsilon(x,y,t,0)  \dx\dy\dt 
\nonumber
\\
    & = - \int_0^\infty\int_\torusd \int_0^\infty\int_\torusd  A(x,t)B(y,\tau) \varphi_s\big(\frac{x+y}{2},\frac{t+\tau}{2}\big)\phi^\epsilon\dx\dt\dy\dtau 
 \nonumber
 \\
    & - \int_0^\infty\int_\torusd \int_0^\infty\int_\torusd \left[C(x,t)B(y,\tau)+A(x,t)D(y,\tau)\right]\cdot\nabla_z\varphi \big(\frac{x+y}{2},\frac{t+\tau}{2}\big)\phi^\epsilon\dx\dt\dy\dtau 
 \nonumber
 \\
    & - \int_0^\infty\int_\torusd \int_0^\infty\int_\torusd \left[C(x,t)\cdot\nabla_yB(y,\tau)+D(y,\tau)\cdot\nabla_xA(x,t)\right]\varphi\big(\frac{x+y}{2},\frac{t+\tau}{2}\big)\phi^\epsilon\dx\dt\dy\dtau 
    \nonumber
 \\
    & - \int_0^\infty\int_\torusd \int_0^\infty\int_\torusd A(x,t)E(y,\tau)\varphi\big(\frac{x+y}{2},\frac{t+\tau}{2}\big)\phi^\epsilon\dx\dt\dy\dtau.
\nonumber
\end{align}

We, turn to passing to the limit $\epsilon\to0$. This is effected by standard approximation results that are listed for the reader's convenience.
Lemma \ref{initialdata} indicates the subtle role played by the initial trace.

\begin{lemma}\label{young}
    Let $f(x,t)\in L^1(\R^d\times(0,\infty))$. Then \[ \begin{split}
        & \int_0^\infty\int_{\R^d}\int_0^\infty\int_{\R^d} f(y,\tau)\varphi\left(\frac{x+y}{2},\frac{t+\tau}{2}\right)\frac{1}{\epsilon}\rho\left(\frac{t-\tau}{\epsilon}\right)\frac{1}{\epsilon^d}\rho\left(\frac{x-y}{\epsilon}\right)\dy\dtau\dx\dt \\
        & \phantom{xxxxxxx} \to \int_0^\infty\int_{\R^d}f(x,t)\varphi(x,t)\dx\dt \quad \forall\varphi\in C_c(\R^d\times[0,\infty)),
    \end{split} \] where we denote \[ \rho\left(\frac{x-y}{\epsilon}\right) = \prod_{\alpha=1}^d\rho\left(\frac{x_\alpha-y_\alpha}{\epsilon}\right). \]
\end{lemma}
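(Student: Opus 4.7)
The plan is to reduce the claim to the classical $L^1$-convergence of mollifications, combined with a uniform-continuity error estimate for the test function $\varphi$. First, I would decompose
\[
\varphi\!\left(\tfrac{x+y}{2},\tfrac{t+\tau}{2}\right) = \varphi(x,t) + R_\epsilon(x,t,y,\tau), \qquad R_\epsilon := \varphi\!\left(\tfrac{x+y}{2},\tfrac{t+\tau}{2}\right) - \varphi(x,t),
\]
and write the quadruple integral as $I_\epsilon = I_\epsilon^{(1)} + I_\epsilon^{(2)}$, where $I_\epsilon^{(1)}$ arises from the first summand and $I_\epsilon^{(2)}$ from the remainder. Abbreviate the space--time mollifier by $\eta_\epsilon(z,s) := \epsilon^{-(d+1)} \rho(z/\epsilon)\rho(s/\epsilon)$.

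For $I_\epsilon^{(1)}$, Fubini gives
\[
I_\epsilon^{(1)} = \int_0^\infty\!\!\int_{\R^d} \varphi(x,t)\,(\tilde f \ast \eta_\epsilon)(x,t)\,\dx\,\dt,
\]
where $\tilde f$ denotes the extension of $f$ by zero to $\R^{d+1}$, so that $\tilde f \in L^1(\R^{d+1})$. By the standard approximation-of-the-identity theorem, $\tilde f \ast \eta_\epsilon \to \tilde f$ in $L^1(\R^{d+1})$ as $\epsilon \to 0$. Since $\varphi$ is bounded and compactly supported, this yields $I_\epsilon^{(1)} \to \int_0^\infty\!\int_{\R^d} f\,\varphi\,\dx\dt$.

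For $I_\epsilon^{(2)}$, note that $\eta_\epsilon$ is supported where $|x-y|\le\epsilon$ and $|t-\tau|\le\epsilon$, so that the midpoint argument of $\varphi$ inside $R_\epsilon$ lies within $\epsilon/2$ of $(x,t)$ in each coordinate. Since $\varphi \in C_c(\R^d \times [0,\infty))$ is uniformly continuous, with some modulus $\omega$ satisfying $\omega(\delta)\to 0$ as $\delta\to 0$, we have $|R_\epsilon| \le \omega(\epsilon/2)$ pointwise on the support of $\eta_\epsilon$. Interchanging the order of integration and using $\int_0^\infty\!\int_{\R^d} \eta_\epsilon(x-y,t-\tau)\,\dx\,\dt \le 1$ gives
\[
|I_\epsilon^{(2)}| \le \omega(\epsilon/2)\,\|f\|_{L^1(\R^d\times(0,\infty))} \longrightarrow 0.
\]

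There is no substantial obstacle here; the only minor bookkeeping issue is that the time variables are restricted to $(0,\infty)$ rather than all of $\R$, so $\eta_\epsilon$ may fail to integrate to exactly $1$ when $(x,t)$ lies within $\epsilon$ of the boundary $t=0$. Extending $f$ by zero beyond $t=0$ before mollifying absorbs this nuisance and permits direct appeal to the standard mollification convergence theorem in $L^1(\R^{d+1})$.
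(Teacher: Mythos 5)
Your proof is correct and follows essentially the same route as the paper: extend $f$ by zero in time, split off the main term (which converges by $L^1$-continuity of translations, i.e.\ the standard mollification theorem) and control the remainder by the uniform continuity of $\varphi$; the paper implements the same two ingredients after the change of variables $Y = X+\epsilon Z$, splitting $\bar f(Y)\varphi(\tfrac{X+Y}{2})-\bar f(X)\varphi(X)$ instead of splitting $\varphi$ at the midpoint. Your handling of the restriction $t\in(0,\infty)$, by keeping the outer integral on the half-space and bounding by $\|\varphi\|_\infty\|\tilde f\ast\eta_\epsilon-\tilde f\|_{L^1(\R^{d+1})}$, is if anything slightly cleaner than the paper's ``it is enough to show'' reduction to the full space.
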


\begin{proof}
    Extend $f$ by setting \[ \bar{f}(x,t) = \begin{cases}
        f(x,t) & t>0 \\
        0 & \textnormal{else}
    \end{cases} \] and set $X = (x,t)$ and $Y = (y,\tau)$. Then, it is enough to show that for all $\varphi\in C_c(\R^d\times[0,\infty))$
    
    \[ \iint\bar{f}(Y)\varphi\left(\frac{X+Y}{2}\right)\frac{1}{\epsilon^{d+1}}\rho\left(\frac{X-Y}{\epsilon}\right)\dY\dX \to \int\bar{f}(X)\varphi(X)\dX. \]

    Indeed, using the change of variables $Y = X+\epsilon Z$, 
    \[ \begin{split}
        I & = \left|\iint \bar{f}(Y)\varphi\left(\frac{X+Y}{2}\right)\frac{1}{\epsilon^{d+1}}\rho\left(\frac{X-Y}{\epsilon}\right)\dY\dX - \int\bar{f}(X)\varphi(X)\dX\right| \\
        & = \left| \iint \left(\bar{f}(X+\epsilon Z)\varphi\left(X+\frac{1}{2}\epsilon Y\right) - \bar{f}(X)\varphi(X)\right)\rho\left(Z\right)\dZ\dX \right| \\
        & \leq \iint |\bar{f}(X+\epsilon Z)-\bar{f}(X)|\left|\varphi\left(X+\frac{1}{2}\epsilon Z\right)\right|\rho(Z)\dZ\dX \\
        & \phantom{xxxx} + \iint \left|\varphi\left(X+\frac{1}{2}\epsilon Z\right)-\varphi(X)\right||\bar{f}(X)|\rho(Z)\dZ\dX,
    \end{split} \]
where both terms tend to zero by the dominated convergence theorem since $f \in L^1$.
\end{proof}

\begin{lemma}\label{initialdata}
    Let $f(\cdot,t)\in L^1(\R^d)$ for all $t>0$ and $\lim_{t\to0}\int_{\R^d}|f(x,t)-f(x,0)|\dx = 0$. Then \[ \begin{split}
        & \int_{\R^d}\int_0^\infty\int_{\R^d} f(y,\tau)\varphi\left(\frac{x+y}{2},\frac{\tau}{2}\right)\frac{1}{\epsilon}\rho\left(-\frac{\tau}{\epsilon}\right)\frac{1}{\epsilon^d}\rho\left(\frac{x-y}{\epsilon}\right)\dy\dtau\dx \\
        & \phantom{xxxxxxx} \to \frac{1}{2}\int_{\R^d}f(x,0)\varphi(x,0)\dx \quad \forall\varphi\in C_c(\R^d\times[0,\infty)).
    \end{split} \] 
\end{lemma}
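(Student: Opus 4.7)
The proof is a standard approximation-of-identity argument, but with a boundary-layer twist due to the restriction $\tau>0$ that accounts for the factor $\tfrac12$. The plan is to rescale by the mollifier parameter and then invoke the $L^1$-continuity of $f$ at $t=0$ together with continuity of translations in $L^1$.

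First I would perform the change of variables $z=(x-y)/\epsilon$ and $s=\tau/\epsilon$, which transforms the integral $I_\epsilon$ on the left-hand side into
\[
I_\epsilon = \int_{\R^d}\int_0^{\infty}\int_{\R^d} f(x-\epsilon z,\,\epsilon s)\,\varphi\!\left(x-\tfrac{\epsilon z}{2},\,\tfrac{\epsilon s}{2}\right) \rho(-s)\,\rho(z)\,\dz\,\ds\,\dx.
\]
Since $\operatorname{supp}\rho\subset B(0,1)$ and $s>0$, the variable $-s$ ranges only in $(-1,0)$, so by symmetry of $\rho$ only half of the temporal mollifier mass is captured:
\[
\int_0^{\infty}\rho(-s)\,\ds = \int_{-\infty}^{0}\rho(u)\,du = \tfrac12 .
\]
This is precisely the source of the $\tfrac12$ in the statement, and is the only place where the argument departs from the computation of Lemma \ref{young}.

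Next I would pass to the limit $\epsilon\to 0$. The key technical point is that $f(\cdot-\epsilon z,\epsilon s)\to f(\cdot,0)$ in $L^1(\R^d)$ uniformly in $(z,s)$ on the compact set $\operatorname{supp}\rho\times(\operatorname{supp}\rho\cap[0,\infty))$. This follows from the triangle inequality
\[
\|f(\cdot-\epsilon z,\epsilon s)-f(\cdot,0)\|_{L^1} \leq \|f(\cdot,\epsilon s)-f(\cdot,0)\|_{L^1} + \|f(\cdot-\epsilon z,0)-f(\cdot,0)\|_{L^1},
\]
where the first term vanishes by hypothesis and the second by the continuity of translations in $L^1(\R^d)$. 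Combined with the uniform continuity of $\varphi$ on compact sets and a uniform $L^1$-bound on $f(\cdot,\epsilon s)$ for small $\epsilon$, the dominated convergence theorem then yields
\[
I_\epsilon \longrightarrow \int_{\R^d} f(x,0)\,\varphi(x,0)\,\dx \cdot \int_0^{\infty}\rho(-s)\,\ds \cdot \int_{\R^d}\rho(z)\,\dz = \tfrac12 \int_{\R^d} f(x,0)\,\varphi(x,0)\,\dx .
\]

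The main subtlety, rather than a real obstacle, is the uniform-in-$(z,s)$ passage to the limit for $f$: one must use both the temporal $L^1$-continuity assumed in the hypothesis and the (automatic) spatial translation-continuity in $L^1$, and these are applied simultaneously over the compact parameter set coming from $\operatorname{supp}(\rho\otimes\rho)$. Once this is in place, and once one observes that the time integral loses exactly half the mollifier mass to the forbidden region $\tau<0$, the remainder is a routine computation.
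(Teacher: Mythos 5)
Your proposal is correct and follows essentially the same route as the paper's proof: rescale by the mollifier parameter, observe that the one-sided time integral captures exactly half of the symmetric mollifier mass, and split the error into a $\varphi$-increment term (controlled by continuity of $\varphi$ and a uniform $L^1$ bound on $f$ near $t=0$) and an $f$-increment term (controlled by the hypothesis of temporal $L^1$-continuity plus continuity of translations in $L^1$). The only cosmetic difference is that you invoke uniform continuity of $\varphi$ where the paper bounds the increment via $\sup|\nabla\varphi|$; this changes nothing essential.
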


\begin{proof} We compare the difference between $K^\eps$ and $K$
\[ \begin{split}
        K^\epsilon &:= \int_{\R^d}\int_0^\infty\int_{\R^d} f(y,\tau)\varphi\left(\frac{x+y}{2},\frac{\tau}{2}\right)\frac{1}{\epsilon}\rho\left(-\frac{\tau}{\epsilon}\right)\frac{1}{\epsilon^d}  
             \rho\left(\frac{x-y}{\epsilon}\right)\dy\dtau\dx 
\\
        K &:= \frac{1}{2}\int_{\R^d}f(x,0)\varphi(x,0)\dx
    \end{split} \] 
By the change of variables $\frac{y-x}{\epsilon} = z$ and $\frac{\tau}{\epsilon} = \sigma$, the symmetry of $\rho $, and $\int_0^1 \rho (\sigma) d\sigma = 1/2$ we have 
 \[ \begin{split}
        |K^\epsilon - K| & \leq \int_{\R^d}\int_0^1\int_{|z|\leq1} \left| f(x+\epsilon z,\epsilon\sigma) \varphi\left(x+\frac{1}{2}\epsilon z, \frac{1}{2}\epsilon\sigma\right) - f(x,0)\varphi(x,0) \right| \rho(\sigma)\rho(z)\dz\dsigma\dx \\
        & \leq \int_{\R^d}\int_0^1\int_{|z|\leq1} \left| f(x+\epsilon z,\epsilon\sigma) \right| \left| \varphi\left(x+\frac{1}{2}\epsilon z, \frac{1}{2}\epsilon\sigma\right) - \varphi(x,0) \right| \rho(\sigma)\rho(z)\dz\dsigma\dx \\
        & \phantom{xxx} + \int_{\R^d}\int_0^1\int_{|z|\leq1} \left| f(x+\epsilon z,\epsilon\sigma) - f(x,0) \right| \left|\varphi(x,0) \right| \rho(\sigma)\rho(z)\dz\dsigma\dx =: J_1 + J_2,
    \end{split} \]
    where \[ \begin{split}
        J_1 & = \int_0^1 \rho(\sigma) \int_{|z|\leq1}\rho(z) \int_{\R^d} \left| f(x+\epsilon z,\epsilon\sigma) \right| \left| \varphi\left(x+\frac{1}{2}\epsilon z, \frac{1}{2}\epsilon\sigma\right) - \varphi(x,0) \right| \dx\dz\dsigma \\
        & \leq \mathcal{O}(\epsilon)\sup_{(z,s)}|\nabla_{(z,s)}\varphi(z,s)| \sup_{|\tau|\leq \epsilon} \int_{\R^d} \left| f(y,\tau) \right| \dy 
    \end{split} \]
    which goes to zero as $\epsilon\to0$, provided $\limsup_{\tau\to0}\|f(\cdot,\tau)\|_{L^1(\R^d)} \leq C$, where $C$ is a constant independent of $\epsilon$.

    For the second term, we have \[ \begin{split}
        J_2 & = \int_0^1\rho(\sigma)\int_{|z|\leq1}\rho(z)\int_{\R^d}\left| f(x+\epsilon z,\epsilon\sigma) - f(x,0) \right| \left|\varphi(x,0) \right| \dx\dz\dsigma \\
        & \leq \int_0^1\rho(\sigma)\int_{|z|\leq1}\rho(z)\int_{\R^d}\left| f(x+\epsilon z,\epsilon\sigma) - f(x+\epsilon z,0) \right| \left|\varphi(x,0) \right| \dx\dz\dsigma \\
        & \phantom{xxx} + \int_0^1\rho(\sigma)\int_{|z|\leq1}\rho(z)\int_{\R^d}\left| f(x+\epsilon z,0) - f(x,0)\right| \left|\varphi(x,0) \right| \dx\dz\dsigma \\
        & \leq \sup_x|\varphi(x,0)|\sup_{0\leq\sigma\leq1}\int_{\R^d}\left| f(y,\epsilon\sigma) - f(y,0) \right|\dy\dsigma \\
        & \phantom{xxx} + \frac{1}{2}\sup_x|\varphi(x,0)|\|f(\cdot+\epsilon z,0)-f(\cdot,0)\|_{L^1(\R^d)}
    \end{split} \]
Since $\lim_{\tau\to0}\int_{\R^d}|f(y,\tau)-f(y,0)|\dy = 0$ and $\lim_{z\to0}\|f(\cdot+z,0)-f(\cdot,0)\|_{L^1(\R^d)} = 0$ the last term goes to zero as $\epsilon\to0$. 
\end{proof}

Using the lemmas we take $\epsilon\to0$ to obtain
\[ \begin{split}
    & -\int_{\T^3} A(x,0)B(x,0)\varphi(x,0)\dx = \int_0^\infty\int_{\T^3} A(x,t)B(x,t)\varphi_t(x,t)\dx\dt \\
    & + \int_0^\infty\int_{\T^3}\left[C(x,t)B(x,t)+A(x,t)D(x,t)\right]\cdot\nabla_x\varphi(x,t)\dx\dt \\
    & + \int_0^\infty\int_{\T^3}\left[C(x,t)\cdot\nabla_xB(x,t)+D(x,t)\cdot\nabla_xA(x,t)\right]\varphi(x,t)\dx\dt \\
    & + \int_0^\infty\int_{\T^3} A(x,t)E(x,t)\varphi(x,t)\dx\dt.
\end{split} \]
Now, we choose \begin{equation}\label{local}
    \varphi(x,t)=\begin{cases}
    1 & t\in[0,T] \\
    \frac{T-t}{\sigma}+1 & T<t\leq T+\sigma \\
    0 & t>T+\sigma
\end{cases}
\end{equation} for some $\sigma>0$ and let $\sigma\to0$: \[ \begin{split}
    & \int_{\T^3} A(x,T)B(x,T)\dx-\int_{\T^3} A(x,0)B(x,0)\dx \\
    & = \int_0^T\int_{\T^3}\left[C(x,t)\cdot\nabla_xB(x,t)+D(x,t)\cdot\nabla_xA(x,t)+A(x,t)E(x,t)\right]\dx\dt.
\end{split} \]
Finally, substituting back $A,B,C,D, E$, summing the resulting equation over $i\in\{1,\dots,n\}$, 
and using the same initial conditions for the two solutions, we arrive at \eqref{eq:relenid}.

%-----------------------------------------------------
%-----------------------------------------------------
%-----------------------------------------------------
%-----------------------------------------------------

 \section{Approximation at the origin -- The shifted solution $(d_i , v_i )$. }\label{sec5}

An additional difficulty when employing the renormalized solutions occurs because of the singularity at the origin.
This will be addressed by introducing the shifted solution $(d_i , v_i)$. In preparation, we introduce a special function $\beta (c_i)$,
designed to approximate the singularity at the origin, and prove :

\bigskip
\noindent
{\it Proof of Corollary \ref{corollary5}.} Let $\beta(s) = \ln(s+\delta)$, for some fixed $\delta\in(0,1)$. Note that $\beta\in C^2([0,\infty))$, since for all $s \in [0,1]$: 
        $$ |\beta(s)|=|\ln(s+\delta)|\leq \max\{|\ln\delta|,\ln(1+\delta)\}$$ 
        $$|\beta'(s)| = \left|\frac{1}{s+\delta}\right| \leq \frac{1}{\delta}, $$ 
        $$|\sqrt{s} \beta''(s)| = \left|\frac{\sqrt{s}}{(s+\delta)^2}\right| \le \frac{1}{\delta^2}.$$ Hence, we can choose it in \eqref{eq:relenid} and carry out the following calculation:
		\begin{eqnarray*}
			&& \nabla \big[(c_i - \bar{c}_i)\beta'(c_i)\big] \cdot c_i u_i - \nabla \big[ (c_i - \bar{c}_i) \beta'(\bar{c}_i) \big] \cdot \bar{c}_i \bar{u}_i \\ &=& \nabla \left(1 - \frac{\bar{c}_i+\delta}{c_i + \delta}\right) \cdot c_i u_i - \nabla \left(\frac{c_i+\delta}{\bar{c}_i+\delta}-1\right) \cdot \bar{c}_i \bar{u}_i \\ &=& -\nabla \left(\frac{\bar{c}_i+\delta}{c_i + \delta}\right) \cdot c_i u_i - \nabla \left(\frac{c_i+\delta}{\bar{c}_i+\delta}\right) \cdot \bar{c}_i \bar{u}_i \\ &=& -\nabla \ln\left(\frac{\bar{c}_i+\delta}{c_i + \delta}\right) \cdot c_i u_i \frac{\bar{c}_i+\delta}{c_i + \delta} - \nabla \ln\left(\frac{c_i+\delta}{\bar{c}_i+\delta}\right) \cdot \bar{c}_i \bar{u}_i\frac{c_i+\delta}{\bar{c}_i+\delta} \\ &=& \nabla \big(\ln(c_i + \delta) - \ln(\bar{c}_i+\delta)\big) \cdot \left(c_i u_i \frac{\bar{c}_i+\delta}{c_i + \delta} - \bar{c}_i \bar{u}_i\frac{c_i+\delta}{\bar{c}_i+\delta}  \right)
		\end{eqnarray*}
		Then,
		\begin{eqnarray*}
			&&\nabla \big(\ln(c_i + \delta) - \ln(\bar{c}_i+\delta)\big) \cdot \left(c_i u_i \frac{\bar{c}_i+\delta}{c_i + \delta} - \bar{c}_i \bar{u}_i\frac{c_i+\delta}{\bar{c}_i+\delta}  \right) \\ && + \nabla \big(\ln(c_i + \delta) - \ln(\bar{c}_i+\delta)\big) \cdot (c_i u_i - \bar{c}_i \bar{u}_i) \\ &=& \nabla \big(\ln(c_i + \delta) - \ln(\bar{c}_i+\delta)\big) \cdot \left(c_i u_i \left(1+\frac{\bar{c}_i+\delta}{c_i + \delta}\right) - \bar{c}_i \bar{u}_i\left(1+\frac{c_i+\delta}{\bar{c}_i+\delta}\right) \right) \\ &=& (c_i + \delta + \bar{c}_i+\delta) \ \nabla \big(\ln(c_i + \delta) - \ln(\bar{c}_i+\delta)\big) \cdot \left( \frac{c_i u_i}{c_i + \delta} - \frac{\bar{c}_i \bar{u}_i}{\bar{c}_i + \delta} \right)
		\end{eqnarray*}
		Since the above calculation is purely algebraic, one would expect that it holds rigorously provided all the terms are well-defined.  Indeed, 
		$$ c_i \in L^\infty({\T^3} \times (0,T)), $$
		$$ \nabla \ln(c_i + \delta) = \frac{2\sqrt{c_i}}{c_i + \delta} \nabla \sqrt{c_i} \in L^2({\T^3} \times (0,T)),$$ 
		$$ \frac{c_i u_i}{c_i + \delta} = \frac{\sqrt{c_i}}{c_i + \delta} \ \sqrt{c_i}u_i \in L^2({\T^3} \times (0,T)) $$
		and thus \eqref{eq:id} is justified. \hfill $\square$ \break

Next, we introduce a new set of variables $d_i(x,t)$ and $v_i(x,t)$ :
 	\begin{eqnarray*}
 		d_i(x,t) &=& c_i(x,t) + \delta \\ 
 		v_i(x,t) &=& \frac{c_iu_i}{c_i + \delta}
 	\end{eqnarray*} for some $\delta\in(0,1)$.
Note that the flux is preserved in the sense that $c_i u_i = d_i v_i$, while the new flux also satisfies the constraint $\sum_{i=1}^n d_i v_i = 0$.
Morepver,  if $c_iu_i\in L^\infty({\T^3} \times (0,T))$, then $d_i v_i \in L^\infty({\T^3} \times (0,T))$ as well. Therefore, \eqref{eq:mass}-\eqref{eq:linear-system} reads: 
 	\begin{equation}
 		\label{eq:shifted main} \partial_t d_i + \nabla \cdot (d_i v_i) = 0, \qquad \nabla d_i = - \sum_{j=1}^n \frac{d_i d_j}{D_{ij}} (v_i - v_j) + \delta \sum_{j=1}^n \frac{1}{D_{ij}}(d_i v_i - d_j v_j)
 	\end{equation} for all $i=1,\dots,n$ and the initial conditions \eqref{eq:inibd} become:
 	\begin{equation}
 		\label{eq:shifted inibd} d_i^0 = c_i^0 + \delta \text{ on } {\T^3}, \ i=1,\dots,n.
 	\end{equation}
 
 	The new variables enjoy the properties:       \begin{equation}\label{bounds}
 	    \delta \le d_i \le 1+\delta \leq 2,
    \end{equation} 
    \begin{equation}\label{sum}
 	    \sum_{i=1}^n d_i = 1+n\delta
    \end{equation} and 
    \begin{equation} \label{estimation:v}
 	\|v_i\|_{L^\infty({\T^3}\times (0,\infty))} = \left\|\frac{c_i u_i} {c_i + \delta }\right\|_{L^\infty}  \le \frac{1}{\delta}\|c u\|_{L^\infty},
 \end{equation}
 	where $\|c u\|_{L^\infty} = \max_{1\le i, j \le n}\{\|c_i u_i\|_{L^\infty},\|\bar{c}_j \bar{u}_j\|_{L^\infty}\}$.
 	
 	Now, we introduce the two matrices
 	\begin{align} 
	\label{sym matrix}
 		A_{ij}(\dd) &= \begin{cases}
 			\sum_{k\ne i} \frac{d_k}{D_{ik}}, & i = j, \\
 			- \frac{\sqrt{d_i d_j}}{D_{ij}}, & i \ne j,
 		\end{cases} 
		\\[5pt]
		\label{pert matrix}
		B_{ij}(\dd) &= \begin{cases}
 		-\sum_{k\ne i} \frac{1}{D_{ik}}, & i = j, \\
 		\frac{\sqrt{ d_j}}{D_{ij} \sqrt{ d_i}}, & i \ne j.
 	\end{cases}
 	\end{align}
 	Note that $A_{i j} (d)$ is symmetric and its spectrum plays a significant role later, while $B_{i j}(d)$ is not symmetric. 
	The linear system in \eqref{eq:shifted main}  can be expressed as 
 	\begin{equation}\label{newsystem}
 		2 \nabla \sqrt{d_i} = - \sum_{j=1}^n \Big(A_{ij}(\dd) + \delta B_{ij}(\dd) \Big) \sqrt{d_j}v_j
 	\end{equation} for all $i=1,\dots,n$. It is a perturbation of \eqref{eq:linear-system} and when $\delta=0$ we recover \eqref{eq:linear-system}. 

    Let $\boldsymbol{z} \in \R^n$. The computation \[ (A(\dd)z)_i=\sum_{j=1}^nA_{ij}(\dd)z_j=\sum_{j\ne i}\frac{d_j}{D_{ij}}z_i-\sum_{j\ne i}\frac{\sqrt{d_id_j}}{D_{ij}}z_j=\sum_{j\not=i}\frac{\sqrt{d_j}}{D_{ij}}(\sqrt{d_j}z_i-\sqrt{d_i}z_j) \] shows that \begin{eqnarray*}
		\operatorname{ran} A(\dd) &=& \left\{ x \in \R^n : \sqrt{\dd} \cdot x = 0 \right\}=:L(\dd) \\ 
		\ker A(\dd) &=& \operatorname{span} \left\{\sqrt{\dd}\right\}=:{L(\dd)}^\perp
	\end{eqnarray*} where by $\sqrt{\dd}$ we denote the vector $(\sqrt{d_1},\dots,\sqrt{d_n})$ and the projection matrices onto $L$ and $L^\perp$ are given by \begin{equation}
		(\mathbb{P}_{L(\dd)})_{ij} = \delta_{ij} - \frac{\sqrt{d_i d_j}}{1+n\delta} \qquad \textnormal{and} \qquad (\mathbb{P}_{L(\dd)^\perp})_{ij} = \frac{\sqrt{d_i d_j}}{1+n\delta}
	\end{equation} respectively.

    Due to \eqref{sum}, \[ \sum_{i=1}^n\frac{d_i}{1+n\delta}=1 \] and using the scaling property $$ A(\dd) = (1+n\delta) A\left(\frac{\dd}{1+n\delta}\right)$$ we can apply the Perron-Frobenius Theorem as in \cite{HuJuTz22}, in order to obtain the estimate:
	\begin{equation} \label{5.7}
		\boldsymbol{z}^\top A(\dd) \boldsymbol{z} = (1+n\delta) \ 	\boldsymbol{z}^\top A\left(\frac{\dd}{1+n\delta}\right) \boldsymbol{z} \ge (1+n\delta)  \mu |\mathbb{P}_{L(\dd)} \boldsymbol{z}|^2
	\end{equation} for all $\boldsymbol{z} \in \R^n$, where $\mu = \min\limits_{i\ne j}\frac{1}{D_{ij}}$.	

%-----------------------------------------------------
%-----------------------------------------------------
%-----------------------------------------------------
%-----------------------------------------------------
	
	\section{Proof of Theorem \ref{theorem-uniqueness}}\label{sec6}

    Using the variables $d_i$ and $v_i$, introduced in Section \ref{sec5}, the relative entropy identity of Corollary \ref{corollary5} reads:
    \begin{eqnarray}
    \label{eq:shifted equality} 
        && F(\dd,\bar{\dd}) \Big |_{t=T} = \sum_{i=1}^n \int_0^T \int_{\T^3} (d_i + \bar{d}_i) \ \nabla(\ln d_i - \ln \bar{d}_i) \cdot (v_i - \bar{v}_i) \dx \dt.
    \end{eqnarray}
	Using the linear system in \eqref{eq:shifted main}, we can rewrite the integrand on the right-hand side of \eqref{eq:shifted equality} as follows:
	\begin{eqnarray*}
		&&\sum_{i=1}^n (d_i + \bar{d}_i)(v_i - \bar{v}_i) \cdot \nabla (\ln d_i - \ln \bar{d}_i) \\ &=& -\sum_{i=1}^n (d_i + \bar{d}_i)(v_i - \bar{v}_i) \cdot \sum_{j\ne i} \frac{1}{D_{ij}}\left( d_j(v_i-v_j) - \bar{d}_j(\bar{v}_i - \bar{v}_j) \right) \\ && + ~ \delta \sum_{i=1}^n (d_i + \bar{d}_i)(v_i - \bar{v}_i) \cdot \sum_{j\ne i} \frac{1}{D_{ij}}\left( v_i - \frac{d_j}{d_i} v_j - \bar{v}_i + \frac{\bar{d}_j}{\bar{d}_i} \bar{v}_j\right) := I_1 + I_2.
	\end{eqnarray*}
	We start with $I_1$: 
    \[ \begin{split}
        I_1 & = -\sum_{i=1}^n d_i (v_i - \bar{v}_i) \cdot \sum_{j\ne i} \frac{1}{D_{ij}}\left( d_j(v_i-v_j) - \bar{d}_j(\bar{v}_i - \bar{v}_j) \right) \\
        & \phantom{xx} - \sum_{i=1}^n \bar{d}_i(v_i - \bar{v}_i) \cdot \sum_{j\ne i} \frac{1}{D_{ij}}\left( d_j(v_i-v_j) - \bar{d}_j(\bar{v}_i - \bar{v}_j) \right)
    \end{split} \] and using the symmetry of $D_{ij}$:
	\begin{eqnarray*}
		I_1 &=&-\sum_{i=1}^n \sum_{j\ne i} \frac{d_i d_j}{2D_{ij}} |(v_i-\bar{v}_i) - (v_j - \bar{v}_j)|^2 - \sum_{i=1}^n \sum_{j\ne i} \frac{d_i}{D_{ij}} (d_j - \bar{d}_j)(v_i - \bar{v}_i)\cdot (\bar{v}_i - \bar{v}_j) \\
		&&-\sum_{i=1}^n \sum_{j\ne i} \frac{\bar{d}_i \bar{d}_j}{2D_{ij}} |(v_i-\bar{v}_i) - (v_j - \bar{v}_j)|^2 - \sum_{i=1}^n \sum_{j\ne i} \frac{ \bar{d}_i}{D_{ij}} (d_j - \bar{d}_j)(v_i - \bar{v}_i)\cdot (v_i - v_j). 
	\end{eqnarray*} 
	Regarding $I_2$, we can split it into two parts:
	\begin{eqnarray*}
		I_2 &=& \delta \sum_{i=1}^n \Big(\sum_{j\ne i} \frac{1}{D_{ij}}\Big) (d_i + \bar{d}_i)|v_i - \bar{v}_i|^2 - \delta \sum_{i=1}^n \sum_{j\ne i} \frac{1}{D_{ij}} (d_i + \bar{d}_i)(v_i - \bar{v}_i) \cdot \left( \frac{d_j}{d_i} v_j - \frac{\bar{d}_j}{\bar{d}_i} \bar{v}_j\right).
	\end{eqnarray*}
	This shows that 
	\begin{equation} \label{5.5}
		\begin{split}
		    & F(\dd,\bar{\dd}) \Big |_{t=T} + \sum_{i=1}^n\sum_{j\ne i} \int_0^T \int_{\T^3}\frac{1}{2D_{ij}}(d_i d_j+\bar{d}_i\bar{d}_j) |(v_i-\bar{v}_i) - (v_j - \bar{v}_j)|^2 \\ 
            & \phantom{xx} = - \sum_{i=1}^n\sum_{j\ne i} \int_0^T \int_{\T^3} \frac{d_i}{D_{ij}} (d_j - \bar{d}_j)(v_i - \bar{v}_i)\cdot (\bar{v}_i - \bar{v}_j)  \ \dx \dt \\ 
            & \phantom{xxx} - \sum_{i=1}^n\sum_{j\ne i} \int_0^T \int_{\T^3} \frac{ \bar{d}_i}{D_{ij}} (d_j - \bar{d}_j)(v_i - \bar{v}_i)\cdot (v_i - v_j)  \ \dx \dt \\  
            & \phantom{xxx} + \delta ~ \sum_{i=1}^n \Big(\sum_{j\ne i} \frac{1}{D_{ij}}\Big) \int_0^T \int_{\T^3} (d_i + \bar{d}_i)|v_i - \bar{v}_i|^2  \ \dx \dt \\ 
            & \phantom{xxx} - \delta ~ \sum_{i=1}^n\sum_{j\ne i} \int_0^T \int_{\T^3} \frac{1}{D_{ij}} (d_i + \bar{d}_i)(v_i - \bar{v}_i) \cdot \left( \frac{d_j}{d_i} v_j - \frac{\bar{d}_j}{\bar{d}_i} \bar{v}_j\right) \ \dx \dt
		\end{split} 
	\end{equation} and we aim to control the terms on the right-hand side by the relative entropy and the dissipation on the left-hand side.
	
	\subsection{Estimation of the dissipation} Following the ideas of \cite{HuJuTz22}, we use the symmetry of $D_{i j}$ to rewrite the dissipation terms as follows: \[ \begin{split}
    \sum_{i=1}^n\sum_{j\ne i} &\frac{d_i d_j}{2D_{ij}}  |(v_i-\bar{v}_i) - (v_j - \bar{v}_j)|^2  =
%        & = \phantom{x} \sum_{i=1}^n\sum_{j\ne i}\frac{d_id_j}{2D_{ij}}((v_i-\bar{v}_i)-(v_j-\bar{v}_j))\cdot(v_i-\bar{v}_i) \\
%    &  \phantom{xx} + \sum_{i=1}^n\sum_{j\ne i}\frac{d_id_j}{2D_{ij}}((v_i-\bar{v}_i)-(v_j-\bar{v}_j))\cdot(v_i-\bar{v}_i) \\
    \phantom{x} \sum_{i=1}^n\sum_{j\ne i}\frac{d_id_j}{D_{ij}}((v_i-\bar{v}_i)-(v_j-\bar{v}_j))\cdot(v_i-\bar{v}_i) \\
    & = \phantom{x} \sum_{i=1}^n\left[\sum_{j\ne i}\frac{d_id_j}{D_{ij}}|v_i-\bar{v}_i|^2-\sum_{j\ne i}\frac{d_id_j}{D_{ij}}(v_j-\bar{v}_j)\cdot(v_i-\bar{v}_i)\right] \\
    & = \phantom{x} \sum_{i=1}^n\left[\sum_{j\ne i}\frac{d_j}{D_{ij}}\sqrt{d_i}(v_i-\bar{v}_i)\cdot\sqrt{d_i}(v_i-\bar{v}_i)-\sum_{j\ne i}\frac{\sqrt{d_id_j}}{D_{ij}}\sqrt{d_i}(v_i-\bar{v}_i)\cdot\sqrt{d_j}(v_j-\bar{v}_j)\right] \\
    & = \phantom{x} \sum_{i=1}^n\sum_{j=1}^nA_{ij}(\dd)\sqrt{d_i}(v_i-\bar{v}_i)\cdot\sqrt{d_j}(v_j-\bar{v}_j).
\end{split} \]
where $A_{ij}(d)$ is the matrix in \eqref{sym matrix}.
Let $\YY = (Y_1,\dots,Y_n)$, where $Y_i = \sqrt{d_i} (v_i - \bar{v}_i)$. By \eqref{5.7}, \[ \begin{split}
    \sum_{i=1}^n\sum_{j=1}^nA_{ij}(\dd)\sqrt{d_i}(v_i-\bar{v}_i)\cdot\sqrt{d_j}(v_j-\bar{v}_j) \geq (1+n\delta)\mu|\mathbb{P}_L(\dd)\boldsymbol{Y}|^2\geq \mu|\mathbb{P}_L(\dd)\boldsymbol{Y}|^2,
\end{split} \] where \[ \begin{split}
    |\mathbb{P}_L(\dd)\YY|^2 & = |\YY|^2-|\mathbb{P}_{L^\perp}(\dd)\YY|^2
\end{split} \] with \[ |\YY|^2=\sum_{i=1}^nd_i|v_i-\bar{v}_i|^2 \] and \[ \begin{split}
    (\mathbb{P}_{L^\perp}(\dd)\YY)_i & = \sum_{j=1}^n\frac{\sqrt{d_id_j}}{1+n\delta}\sqrt{d_j}(v_j-\bar{v}_j) = \frac{\sqrt{d_i}}{1+n\delta}\sum_{j=1}^n(d_jv_j-d_j\bar{v}_j) \\
    & = \frac{\sqrt{d_i}}{1+n\delta}\sum_{j=1}^n(\bar{d}_j\bar{v}_j-d_j\bar{v}_j) = \frac{\sqrt{d_i}}{1+n\delta}\sum_{j=1}^n(\bar{d}_j-d_j)\bar{v}_j
\end{split} \] where we have used the fact that $\sum_{i=1}^nd_iv_i=\sum_{i=1}^n\bar{d}_i\bar{v}_i=0$. 

Then, by \eqref{estimation:v}, \[ \begin{split}
    |\mathbb{P}_{L^\perp}(\dd)Y|^2 & = \frac{1}{1+n\delta}\left(\sum_{j=1}^n(\bar{d}_j-d_j)\bar{v}_j\right)^2\leq\frac{n}{1+n\delta}\sum_{j=1}^n|d_j-\bar{d}_j|^2|\bar{v}_j|^2 \\
    & \leq n\|\bar{v}_j\|_{L^\infty}^2\sum_{j=1}^n|d_j-\bar{d}_j|^2\leq \frac{n}{\delta^2}\|cu\|_{L^\infty}^2 \sum_{j=1}^n|d_j-\bar{d}_j|^2.
\end{split} \] Therefore \[ \sum_{i=1}^n\sum_{j\ne i} \frac{d_i d_j}{2D_{ij}} |(v_i-\bar{v}_i) - (v_j - \bar{v}_j)|^2 \geq \mu\sum_{i=1}^nd_i|v_i-\bar{v}_i|^2-\frac{\mu n}{\delta^2}\|cu\|_{L^\infty}^2 \sum_{j=1}^n|d_j-\bar{d}_j|^2 \] and similarly \[ \sum_{i=1}^n\sum_{j\ne i} \frac{\bar{d}_i \bar{d}_j}{2D_{ij}} |(v_i-\bar{v}_i) - (v_j - \bar{v}_j)|^2 \geq \mu\sum_{i=1}^n\bar{d}_i|v_i-\bar{v}_i|^2-\frac{\mu n}{\delta^2}\|cu\|_{L^\infty}^2 \sum_{j=1}^n|d_j-\bar{d}_j|^2. \] 

Putting everything together, we find:

	\begin{equation} \label{ineq}
{\allowdisplaybreaks
	    \begin{split}
		  & F(\dd,\bar{\dd}) \Big |_{t=T} + \mu 
            \sum_{i=1}^n \int_0^T \int_{\T^3} (d_i+ \bar{d}_i) |v_i - \bar{v}_i|^2 \ \dx \dt \\
            & \le \frac{2 n \mu}{\delta^2}\| c u \|^2_{L^\infty} \sum_{i=1}^n \int_0^T \int_{\T^3} |d_i - \bar{d}_i|^2 \ \dx \dt \\
            & \phantom{x} + \int_0^T \int_{\T^3}\underbrace{-\sum_{i=1}^n\sum_{j\ne i} \frac{d_i}{D_{ij}} (d_j - \bar{d}_j)(v_i - \bar{v}_i)\cdot (\bar{v}_i - \bar{v}_j)}_{J_1}  \ \dx \dt \\ 
            & \phantom{x} + \int_0^T \int_{\T^3}\underbrace{-\sum_{i=1}^n\sum_{j\ne i}  \frac{ \bar{d}_i}{D_{ij}} (d_j - \bar{d}_j)(v_i - \bar{v}_i)\cdot (v_i - v_j)}_{J_2}  \ \dx \dt \\  
            & \phantom{x} + \int_0^T \int_{\T^3} \underbrace{ \delta ~ \sum_{i=1}^n \Big(\sum_{j\ne i} \frac{1}{D_{ij}}\Big)  (d_i + \bar{d}_i)|v_i - \bar{v}_i|^2}_{J_3}  \ \dx \dt \\
            & \phantom{x} + \int_0^T \int_{\T^3} \underbrace{- \delta \sum_{i=1}^n\sum_{j\ne i}  \frac{1}{D_{ij}} (d_i + \bar{d}_i)(v_i - \bar{v}_i) \cdot \left( \frac{d_j}{d_i} v_j - \frac{\bar{d}_j}{\bar{d}_i} \bar{v}_j\right)}_{J_4}  \ \dx \dt
	\end{split}
	}
	\end{equation} and we need to estimate the error terms $J_1,J_2,J_3$ and $J_4$.

 \subsection{Estimation of the error terms} \label{sec6.1} We start with $J_3$. Setting $M = \max\limits_{i\ne j}\frac{1}{D_{ij}}$, we get \begin{equation}\label{J_3}
     J_3 \leq n\delta M \sum_{i=1}^n (d_i + \bar{d}_i)|v_i - \bar{v}_i|^2 .
 \end{equation}
 
 Regarding the terms $J_1$ and $J_2$, we have: \[ \begin{split}
    J_1 & \leq \sum_{i=1}^n\sum_{j\ne i}\frac{d_i}{D_{ij}}|d_j-\bar{d}_j||v_i-\bar{v}_i||\bar{v}_i-\bar{v}_j| \\
    & = \sum_{i=1}^n\sqrt{d_i}|v_i-\bar{v}_i|\sum_{j\ne i}\frac{\sqrt{d_i}}{D_{ij}}|d_j-\bar{d}_j||\bar{v}_i-\bar{v}_j| \end{split} \] and using Young's and Jensen's inequality
    \[ \begin{split} 
    J_1 & \leq \frac{\mu}{4}\sum_{i=1}^nd_i|v_i-\bar{v}_i|^2+\frac{1}{\mu}\sum_{i=1}^n\left(\sum_{j\ne i}\frac{\sqrt{d_i}}{D_{ij}}|d_j-\bar{d}_j||\bar{v}_i-\bar{v}_j|\right)^2 \\
    & \leq \frac{\mu}{4}\sum_{i=1}^nd_i|v_i-\bar{v}_i|^2+\frac{8n^2M^2}{\mu}\|v\|_{L^\infty}^2\sum_{j=1}^n|d_j-\bar{d}_j|^2, \end{split} \] where $\|v\|_{L^\infty}:=\max_{1\leq i\leq n}\|v_i\|_{L^\infty}$. Finally, by \eqref{estimation:v}:
    \[ \begin{split}
    J_1 & \leq \frac{\mu}{4}\sum_{i=1}^nd_i|v_i-\bar{v}_i|^2+\frac{8n^2M^2}{\mu\delta^2}\|cu\|_{L^\infty}^2\sum_{j=1}^n|d_j-\bar{d}_j|^2
\end{split} \] and similarly, \[ J_2 \leq \frac{\mu}{4}\sum_{i=1}^n\bar{d}_i|v_i-\bar{v}_i|^2+\frac{8n^2M^2}{\mu\delta^2}\|cu\|_{L^\infty}^2\sum_{j=1}^n|d_j-\bar{d}_j|^2. \] Hence, we see that \begin{equation}\label{J_1+J_2}
    J_1+J_2 \leq \frac{\mu}{4}\sum_{i=1}^n(d_i+\bar{d}_i)|v_i-\bar{v}_i|^2+\frac{C_1}{\delta^2}\sum_{j=1}^n|d_j-\bar{d}_j|^2
\end{equation} and $C_1>0$ depends on $n$, $\mu$, $M$, and $\|cu\|_{L^\infty}$, but not on $\delta$.

For $J_4$, we first split it into two parts \[ \begin{split}
	J_4 & \leq \delta M \sum_{i=1}^n\sum_{j \ne i}(d_i + \bar{d}_i)|v_i - \bar{v}_i| \left| \frac{d_j}{d_i} v_j - \frac{\bar{d}_j}{\bar{d}_i} \bar{v}_j\right| \\
%     & \leq \delta M \sum_{i=1}^n\sum_{j \ne i} d_i |v_i - \bar{v}_i| \left| \frac{d_j}{d_i} v_j - \frac{\bar{d}_j}{\bar{d}_i} \bar{v}_j\right| + \delta M \sum_{i=1}^n\sum_{j \ne i}  \bar{d}_i |v_i - \bar{v}_i| \left| \frac{d_j}{d_i} v_j - \frac{\bar{d}_j}{\bar{d}_i} \bar{v}_j\right| 
 %    \\
      & \le \delta M \sum_{i=1}^n\sum_{j \ne i} d_i |v_i - \bar{v}_i| \big( \left| \frac{d_j}{d_i} v_j - \frac{d_j}{d_i} \bar{v}_j \right| + \left| \frac{d_j}{d_i} \bar{v}_j - \frac{\bar{d}_j}{\bar{d}_i} \bar{v}_j\right|\big) \\
     & \phantom{xx} + \delta M \sum_{i=1}^n\sum_{j \ne i}  \bar{d}_i |v_i - \bar{v}_i| \big( \left| \frac{d_j}{d_i} v_j - \frac{\bar{d}_j}{\bar{d}_i} v_j \right| + \left| \frac{\bar{d}_j}{\bar{d}_i} v_j- \frac{\bar{d}_j}{\bar{d}_i} \bar{v}_j\right| \big)
\\
    & = \delta M \sum_{i=1}^n\sum_{j \ne i} d_j |v_i - \bar{v}_i| \left| v_j - \bar{v}_j \right| + \delta M \sum_{i=1}^n\sum_{j \ne i} d_i |v_i - \bar{v}_i| |\bar{v}_j| \left| \frac{d_j}{d_i}  - \frac{\bar{d}_j}{\bar{d}_i} \right| \\ 
    & \phantom{xx} + \delta M \sum_{i=1}^n\sum_{j \ne i}  \bar{d}_i |v_i - \bar{v}_i| |v_j| \left| \frac{d_j}{d_i} - \frac{\bar{d}_j}{\bar{d}_i} \right| + \delta M\sum_{i=1}^n\sum_{j \ne i}  \bar{d}_j |v_i - \bar{v}_i| \left| v_j- \bar{v}_j\right| \\ 
    & = \delta M \sum_{i=1}^n\sum_{j \ne i} (d_j + \bar{d}_j) |v_i - \bar{v}_i| \left| v_j - \bar{v}_j \right| \\
    & \phantom{xx} + \delta M \sum_{i=1}^n\sum_{j \ne i} (d_i |\bar{v}_j| + \bar{d}_i |v_j| ) |v_i - \bar{v}_i| \left| \frac{d_j}{d_i} - \frac{\bar{d}_j}{\bar{d}_i} \right| 
    \\
    &=: J_4^1 + J_4^2.
\end{split} \]

The first term is handled by Young's inequality:
\begin{eqnarray*}
	J_4^1 &=& \sum_{j=1}^n\sqrt{d_j+\bar{d}_j}|v_j-\bar{v}_j|\left(\delta M\sqrt{d_j+\bar{d}_j}\sum_{i=1}^n|v_i-\bar{v}_i|\right) \\
    &\le& \frac{\mu}{4} \sum_{j=1}^n(d_j + \bar{d}_j) \left| v_j - \bar{v}_j \right|^2+\frac{4n\delta^2M^2}{\mu} \left(\sum_{i=1}^n |v_i - \bar{v}_i|\right)^2 
\end{eqnarray*} 
then Jensen's inequality in the second term: 
\[ J_4^1 \le \frac{\mu}{4} \sum_{j=1}^n(d_j + \bar{d}_j) \left| v_j - \bar{v}_j \right|^2+\frac{4n^2\delta^2M^2}{\mu} \sum_{i=1}^n |v_i - \bar{v}_i|^2 \] 
and finally we multiply and divide by $d_i+\bar{d}_i$:
\begin{eqnarray*}
    J_4^1 &\leq& \frac{\mu}{4} \sum_{j=1}^n(d_j + \bar{d}_j) \left| v_j - \bar{v}_j \right|^2+\frac{4n^2\delta^2M^2}{\mu} \sum_{i=1}^n (d_i+\bar{d}_i)|v_i - \bar{v}_i|^2\frac{1}{d_i+\bar{d}_i}  \\
    &\le& \frac{\mu}{4} \sum_{j=1}^n(d_j + \bar{d}_j) \left| v_j - \bar{v}_j \right|^2+\frac{2n^2\delta M^2}{\mu} \sum_{i=1}^n (d_i+\bar{d}_i)|v_i - \bar{v}_i|^2 
\end{eqnarray*} where in the last inequality we used that  $d_i+\bar{d}_i\geq2\delta$.

The second term is estimated in a similar way:
\[ \begin{split}
	J_4^2 & \leq \delta M\max\{\|\bar{v}_j\|_{L^\infty},\|v_j\|_{L^\infty}\} \sum_{i=1}^n\sum_{j=1}^n (d_i+\bar{d}_i) |v_i - \bar{v}_i| \left| \frac{d_j}{d_i}  - \frac{\bar{d}_j}{\bar{d}_i} \right| \\
    & \overset{\eqref{estimation:v}}{\le} M\|cu\|_{L^\infty} \sum_{i=1}^n\sum_{j=1}^n (d_i + \bar{d}_i ) |v_i - \bar{v}_i| \left| \frac{d_j}{d_i}  - \frac{\bar{d}_j}{\bar{d}_i} \right| 
    \end{split} \]
and using triangle inequality
\[ \begin{split} 
    J_4^2 & \le M\|cu\|_{L^\infty} \sum_{i=1}^n\sum_{j=1}^n (d_i + \bar{d}_i ) |v_i - \bar{v}_i| \Big(\left| \frac{d_j}{d_i}  - \frac{d_j}{\bar{d}_i} \right| + \left| \frac{d_j}{\bar{d}_i}  - \frac{\bar{d}_j}{\bar{d}_i} \right| \Big) \\ 
    & = M\|cu\|_{L^\infty} \sum_{i=1}^n\sum_{j=1}^n (d_i + \bar{d}_i ) |v_i - \bar{v}_i| \Big( \frac{d_j}{d_i\bar{d}_i} \left| d_i  - \bar{d}_i \right| + \frac{1}{\bar{d}_i}\left| d_j  - \bar{d}_j \right| \Big) \\
    & \le M\|cu\|_{L^\infty} \sum_{i=1}^n\sum_{j=1}^n (d_i + \bar{d}_i ) |v_i - \bar{v}_i| \Big( \frac{2}{\delta^2} \left| d_i  - \bar{d}_i \right| + \frac{1}{\delta}\left| d_j  - \bar{d}_j \right| \Big)
\end{split} \] because $\delta\leq d_i,\bar{d}_i \leq2$ and thus $\frac{1}{d_i\bar{d}_i}=\frac{1}{(c_i+\delta)(\bar{c}_i+\delta)}\leq\frac{1}{\delta^2}$. Then,
\[ \begin{split}
    J_4^2 & = \frac{2M\|cu\|_{L^\infty}}{\delta^2} \sum_{i=1}^n\sum_{j=1}^n (d_i + \bar{d}_i ) |v_i - \bar{v}_i| \left| d_i  - \bar{d}_i \right| \\
    & \phantom{xx} + \frac{M\|cu\|_{L^\infty}}{\delta} \sum_{i=1}^n\sum_{j=1}^n (d_i + \bar{d}_i ) |v_i - \bar{v}_i| \left| d_j  - \bar{d}_j \right|=:J_4^{2,1}+J_4^{2,2}  \, .
    \end{split} \]
Using Young's inequality the first term gives
 \[ \begin{split} 
J_4^{2,1} & = \frac{2nM\|cu\|_{L^\infty}}{\delta^2}\sum_{i=1}^n(d_i + \bar{d}_i ) |v_i - \bar{v}_i| \left| d_i  - \bar{d}_i \right| \\
& = \sum_{i=1}^n\sqrt{d_i+\bar{d}_i}|v_i-\bar{v}_i|\frac{2nM\|cu\|_{L^\infty}}{\delta^2}\sqrt{d_i+\bar{d}_i}|d_i-\bar{d}_i| \\
& \leq \frac{\mu}{8}\sum_{i=1}^n(d_i+\bar{d}_i)|v_i-\bar{v}_i|^2+\frac{16n^2M^2\|cu\|_{L^\infty}^2}{\mu\delta^4}\sum_{i=1}^n|d_i-\bar{d}_i|^2 \, .
\end{split} \] 
Similarly, Young's and Jensen's inequalities give
\[ \begin{split}
    J_4^{2,2} & = \sum_{i=1}^n\sqrt{d_i+\bar{d}_i}|v_i-\bar{v}_i|\frac{M\|cu\|_{L^\infty}}{\delta}\sqrt{d_i+\bar{d}_i}\sum_{j=1}^n|d_j-\bar{d}_j| \\
    & \leq \frac{\mu}{8}\sum_{i=1}^n(d_i+\bar{d}_i)|v_i-\bar{v}_i|^2+\frac{2M^2\|cu\|_{L^\infty}^2}{\mu\delta^2} \sum_{i=1}^n(d_i+\bar{d}_i)\left(\sum_{j=1}^n|d_j-\bar{d}_j|\right)^2 \\
    & \leq \frac{\mu}{8}\sum_{i=1}^n(d_i+\bar{d}_i)|v_i-\bar{v}_i|^2+\frac{8M^2n^2\|cu\|_{L^\infty}^2}{\mu\delta^2}\sum_{j=1}^n|d_j-\bar{d}_j|^2.
\end{split} \] 
Summarizing,
\begin{equation}\label{J_4}
    J_4 \leq \left(\frac{\mu}{2}+C_2\delta\right)\sum_{i=1}^n(d_i+\bar{d}_i)|v_i-\bar{v}_i|^2 + \frac{C_3}{\delta^4}\sum_{j=1}^n|d_j-\bar{d}_j|^2    
\end{equation} where $C_2>0$ depends only on $n,\mu$ and $M$ and $C_3>0$ on $n,\mu,M$ and $\|cu\|_{L^\infty}^2$ and neither of them depends on $\delta$.

Finally, putting together \eqref{ineq}, \eqref{J_3}, \eqref{J_1+J_2} and \eqref{J_4}, we obtain the differential inequality
\[ F(\dd,\bar{\dd}) \Big |_{t=T} + \left(\frac{\mu}{4}-C_4\delta\right) \sum_{i=1}^n \int_0^T \int_{\T^3} (d_i+ \bar{d}_i) |v_i - \bar{v}_i|^2 \ \dx \dt \leq \frac{C_5}{\delta^4}\sum_{i=1}^n\int_0^T \int_{\T^3}|d_i-\bar{d}_i|^2 \ \dx \dt \]
where $C_4$ and $C_5$ are positive constants independent of $\delta$. 

Next, by selecting $0<\delta<\min\{1,\frac{\mu}{4C_4}\}$, we obtain
\begin{equation}\label{final}
    F(\dd,\bar{\dd})\Big |_{t=T} \leq \frac{C_5}{\delta^4}\sum_{i=1}^n\int_0^T \int_{\T^3}|d_i-\bar{d}_i|^2 \ \dx \dt,
\end{equation} where \[ F(\dd,\bar{\dd}) = \sum_{i=1}^n\int_{\T^3}(\ln d_i-\ln\bar{d}_i)(d_i-\bar{d}_i)\dx. \] By Taylor's theorem, \[ (\ln d_i-\ln\bar{d}_i)(d_i-\bar{d}_i) = \frac{1}{\xi}|d_i-\bar{d}_i|^2, \] for some $\xi \in (\min\{d_i,\bar{d}_i\}, \max\{d_i,\bar{d}_i\})\subset(\delta, 1+\delta)$, i.e. \begin{equation} \label{taylor}
    (\ln d_i-\ln\bar{d}_i)(d_i-\bar{d}_i) \geq \frac{1}{1+\delta}|d_i-\bar{d}_i|^2.
\end{equation}

Therefore, \eqref{final} and \eqref{taylor} imply that \[ \sum_{i=1}^n \int_{\T^3}|d_i-\bar{d}_i|^2 \ \dx \Big |_{t = T} \leq C_5\frac{1+\delta}{\delta^4}\sum_{i=1}^n \int_0^T\int_{\T^3}|d_i-\bar{d}_i|^2 \ \dx \dt \] and a direct application of Gr\"onwall's Lemma gives \[ \sum_{i=1}^n \int_{\T^3}|d_i-\bar{d}_i|^2 \ \dx \Big |_{t = T} \leq 0 \] i.e. $d_i=\bar{d}_i$ and thus $c_i=\bar{c}_i$, for all $i=1,\dots,n$. This completes the proof.

%-----------------------------------------------------
%-----------------------------------------------------
%-----------------------------------------------------
%-----------------------------------------------------

\section{Discussion of the hypotheses for uniqueness} \label{sec:discussion}

We discuss here the role of the assumption $c_iu_i\in L^\infty$ that is necessary for the uniqueness result. Refering to  \eqref{ineq}, note that the terms $J_1, J_2$ and $J_4$ need to be estimated in such a way that they produce a term that is dominated by the integral of the symmetrized relative entropy and a term that is absorbed by the dissipation on the left-hand side. The former term should be a multiple of $\sum_i\int_0^T\int_{\T^3}|d_i-\bar{d}_i|^2\dx\dt$, as \eqref{taylor} suggests, while the latter should be a multiple of the dissipation. To do so, we need to apply Young's inequality in each of $J_1,J_2,J_4$ and this requires the differences $v_i-v_j$ and $\bar{v}_i-\bar{v}_j$ to be bounded (which follows if $c_iu_i$ and $\bar{c}_i\bar{u}_i$ are in $L^\infty_{x,t}$). In this sense, the assumption  $c_iu_i\in L^\infty$ is necessary to exploit the full capacity of the relative entropy identity.

The assumption $c_iu_i\in L^\infty_{x,t}$, $i = 1, \dots , n$, is equivalent to $\nabla c_i\in L^\infty_{x,t}$,  $i = 1, \dots , n$. One direction of this implication
is immediate from \eqref{eq:linear-system} and the fact that $0\leq c_i\leq 1$.
For the converse, assume that $c_1,\dots,c_m$ are nonzero and $c_{m+1},\dots,c_n$ are zero (for this we might need to relabel them). By inverting the Maxwell-Stefan system \eqref{eq:linear-system}-\eqref{eq:constraint} (see \cite[Prop. 9, 11]{Gi91}) $c_iu_i = \sum_{k=1}^mA_{ik}\nabla c_k$ for all $i\in\{1,\dots,m\}$ and $c_iu_i = -B_i\nabla c_i$ for $i\in\{m+1,\dots,n\}$, where $A_{ik}$ and $B_i$ are possibly zero but well-defined coefficients that depend on $D_{ij}$ and the positive mass fractions.

We recall that the regularity offered by the existence theory of \cite{JuSt13} is $\nabla \sqrt{c_i} \in L^2_{x,t}$, which implies $\nabla c_i\in L^2_{x,t}$ due to $c_i$ being bounded and is  less than what is required for uniqueness.

\smallskip

{\bf Acknowledgments.} Research partially supported by  King Abdullah University of Science and Technology (KAUST) baseline funds. The first author acknowledges partial support from the Austrian Science Fund (FWF), grants P33010 and F65. This work has received funding from the European Research Council (ERC) under the European Union's Horizon 2020 research and innovation programme, ERC Advanced Grant no. 101018153.

\smallskip

{\bf Conflict of interest.} The authors have no conflict of interest to report.

\end{document}